\newcolumntype{M}[1]{>{\raggedright}m{#1}}
\DeclareMathAlphabet{\mathpzc}{OT1}{pzc}{m}{it}
\newtheorem{theorem}{Theorem}[section]
\newtheorem{lemma}[theorem]{Lemma}
\newtheorem{proposition}[theorem]{Proposition}
\newtheorem{corollary}[theorem]{Corollary}
\newtheorem{claim}[theorem]{Claim}
\theoremstyle{definition}
\newtheorem{definition}[theorem]{Definition}
\newtheorem{example}[theorem]{Example}
\theoremstyle{remark}
\newtheorem{remark}[theorem]{Remark}
\numberwithin{equation}{section}
\begin{document}

\title{Newton weighted-L\^e-Yomdin polynomials and $\mu$-Zariski pairs of surface singularities}

\author{Christophe \textsc{Eyral}, Masaharu \textsc{Ishikawa}, Mutsuo \textsc{Oka}}

\address{C. Eyral, Institute of Mathematics, Polish Academy of Sciences, ul. \'Sniadeckich~8, 00-656 Warsaw, Poland}  
\email{cheyral@impan.pl} 
\address{M. Ishikawa, Department of Mathematics, Hiyoshi Campus, Keio University,  4-1-1 Hiyoshi, Kohoku-ku, Yokohama, Kanagawa 223-8521, Japan}
\email{ishikawa@keio.jp}
\address{M. Oka, Emeritus Professor, Tokyo Institute of Technology, 2-12-1 Ohokayama, Meguro-ku, Tokyo 152-8551, Japan}   
\email{okamutsuo@gmail.com}

\thanks{}

\subjclass[2020]{14B05, 14J17, 32S05, 32S25}

\keywords{Complex surface singularities, $\mu^*$-invariant, $\mu$-constant stratum, Newton non-degeneracy, monodromy zeta-function, constructible sets.}
\begin{abstract}
We investigate surface singularities defined by weighted-L\^e-Yomdin polynomials, with a particular focus on a specific subclass that we refer to as \emph{Newton weighted-L\^e-Yomdin polynomials}. In particular, using polynomials in this subclass, we develop a method to construct new $\mu$-Zariski pairs of surface singularities.
\end{abstract}

\null\vskip -1cm

\maketitle

\markboth{C. Eyral, M. Ishikawa, M. Oka}{Newton weighted-L\^e-Yomdin polynomials and $\mu$-Zariski pairs of surface singularities}

\section{Introduction}

Let $g_0$ and $g_1$ be two polynomials in three complex variables $x,y,z$. Assume that $g_0$ and $g_1$ vanish at the origin $\mathbf{0}\in\mathbb{C}^3$ and that the corresponding germs of surfaces, $V(g_0)$ and $V(g_1)$, have an isolated singularity at $\mathbf{0}$. In \cite{Yau}, Yau conjectured that if $g_0$ and $g_1$ have the same monodromy zeta-function---equivalently, the same characteristic polynomial---and if $V(g_0)$ and $V(g_1)$ are homeomorphic, then the pairs of germs $(\mathbb{C}^3,V(g_0))$ and $(\mathbb{C}^3,V(g_1))$ are homeomorphic too. In \cite{A2,A1}, Artal Bartolo disproved this conjecture, showing that, in general, the monodromy zeta-function and the abstract topology do not determine the embedded topology. Based on his counter-example, we define a pair of polynomials sharing the same monodromy zeta-function as a \emph{Zariski pair of surface singularities in $\mathbb{C}^3$} if the corresponding germs of surfaces have identical abstract topology while exhibiting distinct embedded topologies.

More specifically, in the aforementioned references, Artal Bartolo presents a general method for constructing such Zariski pairs of surfaces, which is detailed as follows. Suppose that $g_0$ and $g_1$ define \emph{superisolated} surface singularities, that is, $g_0=f_0+x^{d+1}$ and $g_1=f_1+x^{d+1}$, where $f_0$ and $f_1$ are reduced homogeneous polynomials of degree $d$, and where the projective tangent cones $C_0$ and $C_1$ of $V(g_0)$ and $V(g_1)$---which are defined by the zero sets in $\mathbb{P}^2$ of $f_0$ and $f_1$, respectively---have their singular points away from the projective line $x=0$ (see \cite{Luengo}). Assune further that $C_0$ and $C_1$ form a \emph{Zariski pair of projective curves in $\mathbb{P}^2$}. By such a pair, we mean that there are regular neighbourhoods $N_0$ and $N_1$ of $C_0$ and $C_1$, respectively, such that $(N_0,C_0)$ and $(N_1,C_1)$ are homeomorphic\footnote{Equivalently, $C_0$ and $C_1$ have the same ``combinatoric'' in the sense of \cite{Artal,ACT}. In particular, this implies that $C_0$ and $C_1$ have the same singularities (up to topological equivalence).}\label{footnote1} while $(\mathbb{P}^2,C_0)$ and $(\mathbb{P}^2,C_1)$ are not homeomorphic (see \cite{Z,Artal})---Zariski pairs of projective curves in $\mathbb{P}^2$ should not be confused with Zariski pairs of surface singularities in $\mathbb{C}^3$. Then, by \cite{A,GZ-L-MH,O1,O3,Siersma,Siersma2,Stevens}, the monodromy zeta-functions of $g_0$ and $g_1$ coincide, and by \cite{Luengo}, $V(g_0)$ and $V(g_1)$ have the same abstract topology. In \cite{A2, A1}, Artal Bartolo showed that if the \emph{Alexander polynomials} of the curves $C_0$ and $C_1$ are different, then the Jordan forms of the monodromies associated with $V(g_0)$ and $V(g_1)$, respectively, are distinct, and consequently the pairs $(\mathbb{C}^3,V(g_0))$ and $(\mathbb{C}^3,V(g_1))$ are not homeomorphic. 

When the Alexander polynomials of the curves $C_0$ and $C_1$ coincide, knowledge in this area remains rather limited. An initial step in this direction was made by the first and third named authors in \cite{EO}, who constructed the first \emph{$\mu^*$-Zariski pair of surface singularities} whose projective tangent cones may have the same Alexander polynomial.\footnote{Examples of Zariski pairs of projective curves sharing the same Alexander polynomial can be found, for instance, in \cite{AC,O-1,O-2,Deg,EO4,EO5}.}  Here, by $\mu^*$-Zariski pair, we refer to a pair of surface singularities, $g_0$ and $g_1$, which share the same abstract topology, the same monodromy zeta-function, and the same Teissier $\mu^*$-invariant, yet lie in distinct path-connected components of the $\mu^*$-constant stratum. Of course, being such a pair does not imply for $V(g_0)$ and $V(g_1)$ to have distinct embedded topologies. However, by \cite{Teissier2}, it is a \emph{necessary} condition for achieving that outcome and represents a significant step toward understanding the problem. 

The construction presented in \cite{EO} is also general in nature and unfolds as follows. Assume that $g_0$ and $g_1$ define \emph{L\^e-Yomdin} surface singularities, that is, $g_0=f_0+x^{d+m}$ and $g_1=f_1+x^{d+m}$, where $f_0$ and $f_1$ are reduced homogeneous polynomials of degree $d$ such that the singularities of the projective tangent cones $C_0$ and $C_1$ of $V(g_0)$ and $V(g_1)$ are not located on the projective line $x=0$ and where $m>0$ is a positive integer (see \cite{I,LeDT}). In fact, by a linear change of coordinates, we can always assume without loss of generality that the singularities of $C_0$ and $C_1$ are located in $\{xyz\not=0\}$, and that $f_0$ and $f_1$ are convenient and Newton non-degenerate on any proper face of their (common) Newton boundary, in the sense of Kouchnirenko \cite{K}. As above, we suppose that $C_0$ and $C_1$  form a Zariski pair of projective curves in~$\mathbb{P}^2$. 
Now, if we assume further that the singularities of $C_0$ and $C_1$ are Newton non-degenerate in suitable local coordinates,\footnote{For example, this is always the case if the singularities are ``simple'' in the sense of Arnol'd \cite{Arnold}.} then we easily check, using \cite{O3}, that the Teissier $\mu^*$-invariant and the monodromy zeta-functions of $g_0$ and $g_1$ coincide, and by \cite{O2}, that $V(g_0)$ and $V(g_1)$ also have the same abstract topology. In \cite{EO}, we showed that under this additional Newton non-degeneracy assumption, $g_0$ and $g_1$ necessarily also lie in distinct path-connected components of the $\mu^*$-constant stratum, and consequently form a $\mu^*$-Zariski pair of surface singularities. 

We expect that the pairs constructed using this method will, in fact, produce Zariski pairs of surface singularities, as defined above. Again, we highlight that in this construction, the curves $C_0$ and $C_1$ may have the same Alexander polynomial. 

In this paper, we develop a new technique for constructing \emph{$\mu$-Zariski pairs of surface singularities} (see below for the definition)\footnote{The first examples of such pairs were obtained in \cite{O2}, albeit under a slightly different definition than the one adopted here.}, beginning with \emph{Zariski pairs of curves in a weighted projective plane} $\mathbb{P}^2(P)$.\footnote{Roughly speaking, a \emph{Zariski pair of curves in  $\mathbb{P}^2(P)$} consists of two curves with the same combinatoric, while possessing non-homeomorphic embeddings in at least one coordinate subspace of $\mathbb{P}^2(P)$ (for a precise definition, see Definition \ref{def-zpwpc}). Such pairs have also been studied in~\cite{ACM}, albeit under a slightly different definition than the one employed here.}  Here, $P={}^t(p_1,p_2,p_3)$ is a primitive\footnote{That is, $\gcd(p_1,p_2,p_3)=1$.} weight vector, and $\mathbb{P}^2(P)$ denotes the corresponding projective plane. More precisely, we work within the following framework. Assume that $g_0$ and $g_1$ are of the form $g_0=f_0+h_0$ and $g_1=f_1+h_1$, where $f_0$ and $f_1$ are the weighted homogeneous initial polynomials of $g_0$ and $g_1$ respectively (with respect to some fixed weight vector $P$). We assume that $f_0$ and $f_1$ have the same weighted degree $d$, the same Newton boundary $\Delta$, and that they are Newton non-degenerate on every proper face of $\Delta$. We also suppose that the Newton polyhedra of $h_0$ and $h_1$ are included in that of $f_0$ and $f_1$, respectively, and that the smallest weighted degrees of their monomials are the same, say equal to $d+m$ for some $m>0$. Finally, we require $g_0$ and $g_1$ to define \emph{weighted-L\^e-Yomdin} singularities at~$\mathbf{0}$, that is, for $j=0,1$ the singular locus of the weighted projective curve $f_j=0$ in $\mathbb{P}^2(P)$ does not meet the curve defined by the weighted homogeneous initial form ($P$-principal part) of $h_j$ (see \cite{ABLMH,ACM}).
In fact, we impose a slightly weaker requirement: specifically, we only ask that the previous condition holds on $\mathbb{P}^2(P) \setminus \{xyz = 0\}$ (see section \ref{nwlyp} below for details).
Hereafter, we will refer to such polynomials $g_0$ and $g_1$ as \emph{weak-Newton weighted-L\^e-Yomdin polynomials} and we will denote their set by $\mathcal{W}_{P,d,m}''(\Delta)$. (Here, ``weak-Newton'' refers to the Newton non-degeneracy of $f_0$ and $f_1$ with respect to the proper faces of $\Delta$.) In the special case where the singularities of the weighted projective curves $f_0=0$ and $f_1=0$ are also Newton non-degenerate (again, see section \ref{nwlyp} for the precise meaning of this), then we will say that $g_0$ and $g_1$ are \emph{Newton weighted-L\^e-Yomdin polynomials}. 

Within this setting, we show that if $f_0$ and $f_1$ form a Zariski pair of curves in $\mathbb{P}^2(P)$ with Newton non-degenerate singularities, then, up to a technical condition on the weights in one special case, the Newton weighted-L\^e-Yomdin polynomials $g_0=f_0+h_0$ and $g_1=f_1+h_1$  form a \emph{$\mu$-Zariski pair of surface singularities in $\mathbb{C}^3$} (see Theorem \ref{mt3}). Here, by $\mu$-Zariski pair, we mean that $g_0$ and $g_1$ have the same abstract topology, the same monodromy zeta-function and the same Teissier $\mu^*$-invariant, but belong to distinct path-connected components of the $\mu$-constant stratum of $\mathcal{W}_{P,d,m}''(\Delta)$. In particular, $g_0$ and $g_1$ also belong to distinct path-connected components of the $\mu^*$-constant stratum of $\mathcal{W}_{P,d,m}''(\Delta)$.

The proof of this theorem is based on three other results presented in this paper and having also  their own interests. The first one concerns the Milnor number of a generic plane section of a weak-Newton weighted-L\^e-Yomdin polynomial $g=f+h\in \mathcal{W}_{P,d,m}''(\Delta)$ and of its weighted homogeneous initial form $f$ (see Theorems \ref{remthm1} and \ref{mt1}). It is used to compare the $\mu^*$-invariant of $g_0=f_0+h_0$ and $g_1=f_1+h_1$ in  Theorem \ref{mt3}. The second result deals with \emph{$\mu$-constant} piecewise-complex analytic one-parameter families $\{g_s\}_{0\leq s\leq 1}$ of weak-Newton weighted-L\^e-Yomdin polynomials $g_s=f_s+h_s\in \mathcal{W}_{P,d,m}''(\Delta)$. It asserts that for any such a family, the singularities of the weighted projective curves defined by the corresponding family $\{f_s\}_{0\leq s\leq 1}$ of  weighted homogeneous initial forms remain stable as the parameter $s$ varies (see Theorem \ref{mt2}). Finally, the third result says that for such a family $\{f_s\}_{0\leq s\leq 1}$, the weighted projective curves $f_0=0$ and $f_1=0$ obtained for $s=0$ and $s=1$, respectively, cannot form a Zariski pair of curves in $\mathbb{P}^2(P)$ (see Proposition \ref{prop-samecc}).
These two last properties are used to show that the polynomials $g_0$ and $g_1$ of Theorem \ref{mt3} belong to distinct path-connected components of the $\mu$-constant stratum of $\mathcal{W}_{P,d,m}''(\Delta)$.

The proof of Theorem \ref{mt3} also relies on the fact that the usual (continuous) path-connected components of $\mathcal{W}_{P,d,m}''(\Delta)$ coincide with its \emph{piecewise complex-analytic} path-connected components (see Remark~\ref{rem-cpca}). This follows from \cite[Proposition 5.2 and Theorem 5.4]{EO2}, together with the fact that $\mathcal{W}_{P,d,m}''(\Delta)$ is a \emph{constructible} set. While the constructibility of this set is straightforward, that of the subset consisting of those elements $g=f+h$ in $\mathcal{W}_{P,d,m}''(\Delta)$ for which the weighted projective curve defined by $f=0$ admits a fixed prescribed configuration of singularities $\Xi$, is more subtle. This latter issue is addressed in Proposition \ref{prop-constructible}, which constitutes one of the key ingredients of the proof of Proposition~\ref{prop-samecc} mentioned above.

Ultimately, to methodically construct examples of Zariski pairs of curves in weighted projective spaces---which is the initial step of our construction of $\mu$-Zariski pairs of surface singularities---we present a method originating from \cite{O5}. This method  associates to each classical \emph{strong}\footnote{By a \emph{strong} Zariski pair of curves in $\mathbb{P}^2$, we mean a Zariski pair in which the embedded topologies of the curves are distinguished by the fundamental groups of their complements.} Zariski pair of curves  in the standard projective plane, a corresponding Zariski pair of curves in a special weighted projective plane (see Theorem \ref{mt4bis} and Corollary \ref{mt4}).

\section{Setting and preliminary observations}

In this section, we set up the geometric framework in which we will operate, and make some observations that will be useful later on. The notations introduced here are used consistently throughout the paper.

\subsection{Weighted projective plane and affine coordinate charts}\label{sect-wpswpc}

Let us fix, once and for all, coordinates $(x,y,z)$ on $\mathbb{C}^3$, as well as a vector $P={}^t(p_1,p_2,p_3)\in\mathbb{N}^3$ of positive weights on the variables $x,y,z$ such that $p_1\geq p_2\geq p_3>0$ and $\gcd(p_1,p_2,p_3)=1$. The space of orbits under the $\mathbb{C}^*$-action on $\mathbb{C}^3\setminus\{\mathbf{0}\}$ defined by
\begin{equation*}\label{csa}
t\cdot(x,y,z):=(t^{p_1}x,t^{p_2}y,t^{p_3}z)
\end{equation*}
is called the \emph{weighted projective plane of type $P$} and is denoted by $\mathbb{P}^2(P)$. It is well known that $\mathbb{P}^2(P)$ is a normal variety having only quotient singularities, all located in $\{xyz=0\}$.

Observe that if $p_3=1$, then the open set 
\begin{equation*}
U_z:=\mathbb{P}^2(P)\setminus\{z=0\}
\end{equation*}
is non-singular, isomorphic to $\mathbb{C}^2$ with coordinates $u:=x/z^{p_1}$ and $v:=y/z^{p_2}$ by the mapping $[x\!:\!y\!:\!z]_P\in U_z\mapsto (u,v)\in\mathbb{C}^2$ with inverse $(u,v)\in\mathbb{C}^2\mapsto [u\!:\!v\!:\!1]_P\in U_z$. Here, $[\dots]_P$ denotes the equivalence class in $\mathbb{P}^2(P)$. In this case, we will say that $(U_z,(u,v))$ is an \emph{affine coordinate chart} of $\mathbb{P}^2(P)$. Similarly, if $p_2=1$ or $p_1=1$, then the open sets 
\begin{equation*}
U_y:=\mathbb{P}^2(P)\setminus\{y=0\}
\quad\mbox{or}\quad
U_x:=\mathbb{P}^2(P)\setminus\{x=0\}
\end{equation*}
are also isomorphic to $\mathbb{C}^2$, with coordinates $u':=x/y^{p_1}$, $w':=z/y^{p_3}$ for $U_y$, and $v'':=y/x^{p_2}$, $w'':=z/x^{p_3}$ for $U_x$, and $(U_y,(u',w'))$ and $(U_x,(v'',w''))$ are also called affine coordinate charts. Note that if $p_3>1$, then the assumption $p_1\geq p_2\geq p_3$ implies that $\mathbb{P}^2(P)$ has no affine coordinate chart.

Finally, observe that if $f(x,y,z)$ is a complex, reduced, weighted homogeneous polynomial of degree $d$ with respect to $P$ (in short, of $P$-degree $d$) with $p_3=1$, and if $V_P(f)$ is the weighted projective curve defined by $f$ in $\mathbb{P}^2(P)$, then, in the affine coordinate chart $(U_z,(u,v))$,  the curve $V_P(f)$ is defined by $f(u,v,1)=0$.

\subsection{Weighted homogenization}\label{subsect-wh}

In this section, we assume that $p_3=1$. Let 
\begin{equation*}
F(x,y)=\sum_{(a,b)}\nu_{a,b}\, x^ay^b
\end{equation*}
be a (not necessarily weighted homogeneous) polynomial of degree $d$ with respect to the weight vector $P':={}^t(p_1,p_2)$, and let 
\begin{equation*}
F_d(x,y):=\sum_{p_1a+p_2b=d}\nu_{a,b}\, x^ay^b.
\end{equation*}
Here, of course, $d$ is defined by $d:=\sup\{p_1a+p_2b\, ;\, \nu_{a,b}\not=0\}$. The \emph{weighted homogenization of $F$ with respect to $P={}^t(p_1,p_2,1)$} is the polynomial $f$ defined by 
\begin{equation*}
f(x,y,z):=z^d\cdot F\Big(\frac{x}{z^{p_1}},\frac{y}{z^{p_2}}\Big).
\end{equation*}
Clearly, $f$ is a weighted homogeneous polynomial of $P$-degree $d$, and
\begin{equation*}
f(0,y,z)=z^d F(0,y/z^{p_2}),\quad f(x,0,z)=z^d F(x/z^{p_1},0), \quad f(x,y,0)=F_d(x,y).
\end{equation*}

Now, consider a reduced homogeneous polynomial $h(x,y,z)$ of degree $d$. By a linear change of coordinates, we may assume that $h(0,y,z)$, $h(x,0,z)$ and $h(x,y,0)$ are convenient and Newton non-degenerate. Let 
\begin{equation*}
H(x,y):=h(x,y,1)
\quad\mbox{and}\quad
F(x,y):=H(x,y^{p_1}). 
\end{equation*}
Clearly, the degree of $F$ with respect to ${}^t(p_1,1)$ is $p_1d$. Consider the weighted homogenization $f$ of $F$ with respect to the weight vector ${}^t(p_1,1,1)$, that is,
\begin{equation*}
f(x,y,z):=z^{p_1d}\cdot F\Big(\frac{x}{z^{p_1}},\frac{y}{z}\Big),
\end{equation*}
which is a weighted homogeneous polynomial of degree $p_1d$ with respect to ${}^t(p_1,1,1)$. Clearly, we have
\begin{align*}
& f(0,y,z)=z^{p_1d} F(0,y/z)=z^{p_1d} H(0,(y/z)^{p_1}),\\
& f(x,0,z)=z^{p_1d} F(x/z^{p_1},0)=z^{p_1d} H(x/z^{p_1},0),\\
& f(x,y,0)=F_{p_1d}(x,y)=H_d(x,y^{p_1}).
\end{align*}
It follows that these face-functions are Newton non-degenerate.
Indeed, since $h(0,y,z)$, for instance, is convenient and Newton non-degenerate, it can be written as $h(0,y,z)=\prod_{i=1}^d(y-\alpha_i z)$, and therefore $f(0,y,z)=\prod_{i=1}^d(y^{p_1}-\alpha_i z^{p_1})$, where $\alpha_1,\ldots,\alpha_d$ are mutually distinct.

\section{Milnor number and generic plane section}\label{sect-mngps}

Let $\mathcal{W}_{P,d}$ be the space of complex, reduced, weighted homogeneous polynomials (in three variables) of degree $d$ with respect to $P$, and for any $f(x,y,z)\in \mathcal{W}_{P,d}$, let $V(f)$ denote the \emph{affine surface} defined by $f$ in $\mathbb{C}^3$. As above, we denote by $V_P(f)$ the weighted projective curve defined by $f$ in $\mathbb{P}^2(P)$, and we put
\begin{equation}\label{def-p2s}
\mathbb{P}^{2*}(P):=\mathbb{P}^2(P)\setminus\{xyz=0\}.
\end{equation}
Note that since the singular locus $\mbox{Sing}(V(f))$ of $V(f)$ is stable under the $\mathbb{C}^*$-action with respect to $P$, the intersection $V_P(f)\cap \mathbb{P}^{2*}(P)$ has only (a finite number of) \emph{isolated} singularities if $\mbox{Sing}(V(f))\subseteq\mathbb{C}^{*3}\cup\{\mathbf{0}\}$.

\subsection{The sets $\mathcal{W}_{P,d}(\Delta)$ and $\mathcal{W}'_{P,d}(\Delta)$}\label{sect-mngps-31}
Now, take a Newton polyhedron $\Gamma_+\subseteq\mathbb{R}^3_+$ whose Newton boundary has a unique $2$-dimensional (compact) face $\Delta$, and consider the subspaces 
\begin{equation*}
\mathcal{W}_{P,d}(\Delta)
\quad\mbox{and}\quad
\mathcal{W}'_{P,d}(\Delta)
\end{equation*}
of $\mathcal{W}_{P,d}$ defined as~follows:
\begin{enumerate}
\item[$\bullet$]
$f \in \mathcal{W}_{P,d}(\Delta)$ if and only if $f$ is Newton non-degenerate, possesses an isolated critical point at $\mathbf{0}$ and satisfies $\Gamma(f) = \Delta$, where $\Gamma(f)$ denotes the Newton boundary of $f$;
\item[$\bullet$]
$f\in \mathcal{W}'_{P,d}(\Delta)$ if and only if $f$ has a (possibly non-isolated) critical point at $\mathbf{0}$, satisfies $\Gamma(f)=\Delta$,  and is Newton non-degenerate on every proper face of $\Delta$, while it may fail to be Newton non-degenerate on $\Delta$ itself.
\end{enumerate}
\emph{Hereafter, we shall assume that $\mathcal{W}_{P,d}(\Delta)\not=\emptyset$.} Then $\mathcal{W}'_{P,d}(\Delta)\subseteq\overline{\mathcal{W}_{P,d}(\Delta)}$, where the bar stands for the closure of $\mathcal{W}_{P,d}(\Delta)$ in $\mathcal{W}_{P,d}$.  In particular, this implies the following.

\begin{remark}\label{rem-singset}
For any $f\in\mathcal{W}'_{P,d}(\Delta)$, the affine surface $V(f)$ has no singularity on $\{xyz=0\}$, except possibly at the origin, that is, $\mbox{Sing}(V(f))\subseteq\mathbb{C}^{*3}\cup\{\mathbf{0}\}$. 
Indeed, assume, for instance, that the line $x=y=0$ is contained in $\mbox{Sing}(V(f))$. Take $f'\in\mathcal{W}_{P,d}(\Delta)$ sufficiently close to $f$. Clearly, just like for $f$, the polynomial $f'$ does not contained any term of the form $z^\gamma$. Now, since $f'$ has an isolated singularity at $\mathbf{0}$, it must include a monomial of the form $xz^\alpha$ or $yz^\beta$, and necessarily such a monomial must be a vertex of $\Delta$. But then $f$ also has such a monomial, and hence $\partial f/\partial x (0,0,z)\not=0$, which is a contradiction. The other cases follow by similar arguments.
\end{remark}

Remark \ref{rem-singset} implies that for any $f\in\mathcal{W}'_{P,d}(\Delta)$, the singularities of the weighted projective variety $V_P(f)$ located in the coordinate subspaces of $\mathbb{P}^2(P)$ may arise solely from the singularities of the ambient space $\mathbb{P}^2(P)$ itself. In other words, the points in $\mbox{Sing}(V_P(f)) \cap \{xyz = 0\}$ are \emph{non-critical} points of $f$ (i.e., they are not zeros of the Jacobian of $f$), but only reflect singularities inherited from $\mathbb{P}^2(P)$.

Finally, note that since the elements of $\mathcal{W}'_{P,d}(\Delta)$ have at most a finite number of $1$-dimensional critical sets---which are orbits through the origin of the associated $\mathbb{C}^*$-action with respect to $P$---they are special types of so-called \emph{weakly almost Newton non-degenerate functions} (see \cite[\S 2.7]{O2}).

\subsection{Main result of this section}
It concerns the Milnor number of a generic plane section of an element in $\mathcal{W}'_{P,d}(\Delta)$, and is stated as follows.

\begin{theorem}\label{mt1}
Let $f\in\mathcal{W}'_{P,d}(\Delta)$, and let $H=\{(x,y,z)\in\mathbb{C}^3\, ;\, z=ax+by\}$ be a generic plane for $V(f)$ through the origin. 
\begin{enumerate}
\item
If $p_1=p_2=p_3$ or $p_1\geq p_2>p_3$, then the following properties hold true:
\begin{enumerate} 
\item
the polynomial $f^H(x,y):=f\vert_H(x,y,ax+by)$ is Newton non-degenerate with respect to the coordinates $(x,y)$; 
\item
the \emph{Newton number}\footnote{For the definition of the Newton number $\nu(f^H)$ of $f^H$---which itself depends on the definition of the Newton number of a compact polyhedron given in \cite[\S 1.7]{K}---refer to \cite[\S 1.8]{K} in the case where $f^H$ is convenient, and to \cite[\S 2]{BO} otherwise.} $\nu(f^H)$ of $f^H$ is independent of $H$, and
the Milnor number $\mu(f^H)$ of $f^H$ at~$(0,0)$---which is equal to $\nu(f^H)$ by (a) above\footnote{This follows from \cite[\S 1.10, Th\'eor\`eme I]{K} if $f^H$ is convenient, and from \cite[Corollary 3.10]{BO} if $f^H$ is not convenient.}---is completely determined by~$\Delta$.
\end{enumerate}
\item
In the case where $p_1>p_2=p_3$, the same properties (a) and (b) still hold true if we assume further that $\Gamma(f^H)=\Gamma(f(x,y,0))$.
\end{enumerate}
\end{theorem}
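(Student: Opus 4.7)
The strategy is to describe $\Gamma(f^H)$ and all its face functions explicitly in terms of the combinatorics of $\Delta$, and then conclude via Kouchnirenko's formula.

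The starting observation is that substituting $z=ax+by$ in each monomial $x^{v_1}y^{v_2}z^{v_3}$ of $f$ yields $\sum_{\ell=0}^{v_3}\binom{v_3}{\ell}a^\ell b^{v_3-\ell}\,x^{v_1+\ell}y^{v_2+v_3-\ell}$, whose exponents are the lattice points on the segment $\sigma(v)$ joining $P_l(v):=(v_1,v_2+v_3)$ and $P_r(v):=(v_1+v_3,v_2)$. For generic $(a,b)\in(\mathbb{C}^*)^2$, no cancellation occurs between contributions from distinct monomials, so the support of $f^H$ is $\bigcup_{v\in\mathrm{supp}(f)}\sigma(v)\cap\mathbb{Z}^2$. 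Since the vertices of $\Delta$ lie in $\mathrm{supp}(f)$, I would conclude that $\mathrm{conv}(\mathrm{supp}(f^H))=\mathrm{conv}(P_l(\Delta)\cup P_r(\Delta))$, so $\Gamma(f^H)$ depends only on $\Delta$; in particular, $\nu(f^H)$ is independent of~$H$ and determined by~$\Delta$. Moreover, for an inner normal $(w_1,w_2)\in\mathbb{R}_{>0}^2$ of a face $\sigma$ of $\Gamma(f^H)$, minimizing the affine form $w_1v_1+w_2v_2+w_2v_3+(w_1-w_2)\ell$ over $(v,\ell)$ selects $\ell=0$ when $w_1>w_2$, $\ell=v_3$ when $w_1<w_2$, or any $\ell\in[0,v_3]$ when $w_1=w_2$; subsequently, $v$ is forced to lie on the face $\tau$ of $\Delta$ whose 3D inner normal is $(w_1,w_2,w_2)$, $(w_1,w_2,w_1)$, or $(1,1,1)$, respectively, with face function $f_\tau(x,y,by)$, $f_\tau(x,y,ax)$, or $f_\tau(x,y,ax+by)$.

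The case analysis for~(a) then proceeds as follows. When $p_1=p_2=p_3$, $f$ is classically homogeneous of degree $D:=d/p_1$, $\Gamma(f^H)$ is the segment from $(D,0)$ to $(0,D)$, and Newton non-degeneracy of $f^H$ amounts to $f^H$ being reduced, which follows from reducedness of $f$ via classical Bertini applied to the corresponding line in $\mathbb{P}^2$. When $p_1\geq p_2>p_3$, none of the three 3D normals above can be proportional to $(p_1,p_2,p_3)$ (each would force $p_2=p_3$, $p_1=p_3$, or $p_1=p_2=p_3$, respectively), so every face $\tau$ produced is a \emph{proper} face of $\Delta$, on which $f_\tau$ is non-degenerate by hypothesis; a Bertini-type genericity argument then transfers non-degeneracy to the corresponding face function $f_\tau(x,y,\cdot)$ on $(\mathbb{C}^*)^2$ for generic $(a,b)$. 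In case~(2), the extra assumption $\Gamma(f^H)=\Gamma(f(x,y,0))$ forces $\Gamma(f^H)$ to be a single segment with 2D normal $(p_1,p_2)$; this corresponds to $\Delta$ itself via the 3D normal $(p_1,p_2,p_2)=(p_1,p_2,p_3)$, with unique face function $f(x,y,by)$. For generic $b$, I would establish its non-degeneracy as follows: by Remark~\ref{rem-singset}, $\mathrm{Sing}(V(f))$ is a finite union of $\mathbb{C}^*$-orbits contained in $\mathbb{C}^{*3}\cup\{\mathbf{0}\}$; since $p_2=p_3$, each such orbit lies in $\{z=by\}$ only for one specific value of $b$, hence generic $b$ yields a plane missing all of them, so $V(f)\cap\{z=by\}$ is smooth in $(\mathbb{C}^*)^3$ by Bertini, which is exactly non-degeneracy of $f(x,y,by)$ in $(\mathbb{C}^*)^2$.

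With~(a) in place, Kouchnirenko's theorem \cite[\S 1.10]{K} (or its extension \cite[Corollary 3.10]{BO} in the non-convenient setting) gives $\mu(f^H)=\nu(f^H)$, and since $\nu(f^H)$ is determined by $\Gamma(f^H)$, hence by $\Delta$, so is $\mu(f^H)$, completing~(b). The main obstacle I anticipate lies in case~(2): the unique face function is $f(x,y,by)$, involving the full top face $\Delta$ itself, so its non-degeneracy does \emph{not} follow from $f\in\mathcal{W}'_{P,d}(\Delta)$ and requires the Bertini-type argument above, crucially exploiting the equality $p_2=p_3$ and the finiteness of the singular orbit set. Choosing $H$ uniformly generic for all faces of $\Gamma(f^H)$ simultaneously---a finite intersection of non-empty Zariski-open conditions---then completes the proof.
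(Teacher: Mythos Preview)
Your approach is essentially correct and follows a genuinely different route from the paper's.

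The paper proceeds by a direct case analysis of the shape of $\Gamma(f^H)$ when $p_2>p_3$: it identifies the segment $AB:=\Gamma(f(x,y,0))$ on the line $L=\{p_1\nu_1+p_2\nu_2=d\}$ as the ``main'' edge of $\Gamma(f^H)$, and then, according to whether the endpoints $A,B$ lie within distance~$1$ of the coordinate axes, either concludes directly or invokes a structural claim (the existence of a vertex $x^\alpha z$ of $\Delta$, forced by the isolated-singularity hypothesis on nearby elements of $\mathcal{W}_{P,d}(\Delta)$) to produce additional short edges near the axes. Each such edge is then checked by hand to be a binomial or trinomial with coefficients depending generically on $(a,b)$, hence non-degenerate. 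Your approach, by contrast, treats all faces of $\Gamma(f^H)$ uniformly via the normal-vector correspondence $\sigma\leftrightarrow\tau$, reducing non-degeneracy of $(f^H)_\sigma$ to that of $f_\tau$ on a proper face $\tau\subsetneq\Delta$. This buys a cleaner, more conceptual argument with no case distinctions, at the cost of a transfer step that deserves more than a bare appeal to ``Bertini'': the pencil $\{z=by\}$ is not transverse to the torus symmetry of $V(f_\tau)$, so genericity in $b$ is not automatic. In fact, for $w_1\neq w_2$ the transfer holds for \emph{every} nonzero $b$ (resp.\ $a$): the two Euler relations for $f_\tau$---with respect to $P$ and to $(w_1,w_2,w_2)$ (resp.\ $(w_1,w_2,w_1)$), which are linearly independent precisely because $p_2>p_3$ (resp.\ $p_1>p_3$)---combine on the non-transversality locus to force $\nabla f_\tau=0$, contradicting non-degeneracy of $f_\tau$. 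Only for $w_1=w_2$ is a genuine genericity argument in $(a,b)$ required, and there a dimension count suffices.

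In case~(2), your argument via finite singular orbits works but is more laborious than necessary. The paper simply observes that at $b=0$ the face function $f(x,y,by)$ specializes to $f(x,y,0)$, which is the face function of $f$ on the \emph{proper} face $\Gamma(f(x,y,0))\subset\Delta$ and hence non-degenerate by hypothesis; openness of non-degeneracy in $b$ then concludes. Your identification of the relevant face as $\Delta$ itself (rather than its edge $\Gamma(f(x,y,0))$) is what makes you miss this shortcut.
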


Note that, by Remark \ref{rem-singset}, if $f\in\mathcal{W}'_{P,d}(\Delta)$ then $\mbox{Sing}(V(f))\subseteq\mathbb{C}^{*3}\cup\{\mathbf{0}\}$, and this implies that the polynomial $f(x,y,0)$ of the variables $x,y$ does not identically vanish, so that $\Gamma(f(x,y,0))\not=\emptyset$.

Theorem \ref{mt1} is proved in section \ref{proofmt1}.

\begin{remark}
The assumption of Newton non-degeneracy on the boundary of $\Delta$ is essential for the validity of the theorem. Without this assumption, the assertion fails (see \cite[Theorem 2]{O4}).
\end{remark}

\section{Newton weighted-L\^e-Yomdin polynomials}\label{nwlyp}

In this section, we introduce the principal object of study in this paper: the \emph{Newton weighted-L\^e-Yomdin polynomials}.

\subsection{The set $\mathcal{W}'_{P,d}(\Delta,\Xi)$}\label{sect-xilist}
Fix a list $\Xi=\{\xi_1,\ldots,\xi_k\}$ of isomorphism classes of reduced plane curve singularities,\footnote{Here, two germs of reduced plane curves are in the same isomorphism class if there is a local, ambient homeomorphism of $\mathbb{C}^2$ sending the first germ onto the second one.}  and associated with this list, define a subset 
\begin{equation*}
\mathcal{W}'_{P,d}(\Delta,\Xi)\subseteq \mathcal{W}'_{P,d}(\Delta)
\end{equation*}
as follows. Given a polynomial $f\in \mathcal{W}'_{P,d}(\Delta)$, take a regular simplicial cone subdivision $\Sigma^*$ of its dual Newton diagram $\Gamma^*(f)$, and consider the toric modification $\pi\colon X\to \mathbb{C}^3$ associated with this subdivision. (For the definitions, see \cite[Chap.~II, \S 1 and Chap.~III,~\S 3]{O1}.) Note that $P$ is a vertex of $\Sigma^*$. Then, pick a cone $\sigma:=\mbox{Cone}(P,Q,R)$ of $\Sigma^*$ which includes $P$ as a vertex, and consider the corresponding toric chart $\mathbb{C}^3_\sigma$ of $X$ with coordinates $(u,v,w)$. Let $\hat E(P)$, $\hat E(Q)$ and $\hat E(R)$ denote the divisors of $\pi$ associated with the vertices $P$, $Q$ and $R$ respectively (see ibid.), and let $\widetilde{V}(f)$ be the strict transform of $V(f)$ by $\pi$. Assume that $\hat E(P)$, $\hat E(Q)$ and $\hat E(R)$ are defined by $u=0$, $v=0$ and $w=0$, respectively. Finally, put 
\begin{equation*}
E(P,f):=\hat E(P)\cap \widetilde{V}(f)
\quad\mbox{and}\quad
\hat E(P)^*:=\hat E(P)\setminus (\hat E(Q)\cup \hat E(R)),
\end{equation*}
and write $\mbox{Sing}(E(P,f))$ for the singular locus of $E(P,f)$. By \cite[\S 3]{O3}, $\mbox{Sing}(E(P,f))$ is a finite set, and by \cite[Lemma 3.1]{EO3}, we have
\begin{equation}\label{cincl2}
\mbox{Sing}(E(P,f))\subseteq \hat E(P)^*.
\end{equation}
Moreover, by \cite[Proposition 3.2]{EO3} and the comment that immediately follows it, there exists an analytic isomorphism
\begin{equation}\label{analyiso}
\psi\colon \hat E(P)^* \to\mathbb{P}^{2*}(P)
\end{equation}
such that 
\begin{equation*}
\psi(E(P,f)\cap\hat E(P)^*)=V_P(f)\cap\mathbb{P}^{2*}(P).
\end{equation*}
In particular, if $\rho_i\in\mbox{Sing}(V_P(f))\cap \mathbb{P}^{2*}(P)$ and if $\varrho_i:=\psi^{-1}(\rho_i)$ is the corresponding singular point in $\mbox{Sing}(E(P,f))\subseteq \hat E(P)^*$, then the germs
\begin{equation*}
(V_P(f),\rho_i) 
\quad\mbox{and}\quad
(E(P,f),\varrho_i)
\end{equation*}
 are topologically equivalent.\footnote{One must not confuse $\rho_i$ and $\varrho_i$, which, although topologically equivalent, lie in different spaces.}  The subset $\mathcal{W}'_{P,d}(\Delta,\Xi)$ of $\mathcal{W}'_{P,d}(\Delta)$ associated with the list of singularities $\Xi=\{\xi_1,\ldots,\xi_k\}$ is then defined as follows.

\begin{definition}\label{def-avecXI}
A polynomial $f\in\mathcal{W}'_{P,d}(\Delta)$ is in $\mathcal{W}'_{P,d}(\Delta,\Xi)$ if $V_P(f)$ has $k$ isolated singular points $\rho_1,\ldots,\rho_k$ in $\mathbb{P}^{2*}(P)$ that produce $k$ isolated singularities $\varrho_1,\ldots,\varrho_k$ of $E(P,f)$ in $\hat E(P)^*$ such that for any $1\leq i\leq k$ the isomorphism class of the (equivalent) germs $(V_P(f),\rho_i) $ and $(E(P,f),\varrho_i)$ is $\xi_i$.
\end{definition}

\subsection{The sets $\mathcal{W}''_{P,d,m}(\Delta)$ and $\mathcal{W}''_{P,d,m}(\Delta,\Xi)$ of weak-Newton weighted-L\^e-Yomdin polynomials}
Now, let $m>0$ be a positive integer, and let $\mathcal{W}''_{P,d,m}(\Delta)$ (respectively, $\mathcal{W}''_{P,d,m}(\Delta,\Xi)$) be the space of polynomials in the variables $x,y,z$ with an isolated critical point at $\mathbf{0}\in\mathbb{C}^3$ and of the form 
\begin{equation}\label{dews}
g=f+h, 
\end{equation}
where $f\in\mathcal{W}'_{P,d}(\Delta)$ (respectively, $f\in\mathcal{W}'_{P,d}(\Delta,\Xi)$) and $h$ is a polynomial such that:
\begin{enumerate}
\item
$\Gamma_+(h)\subseteq\Gamma_+(f)$;
\item
$d(P,h)=d+m$ (in particular, this implies that $f$ is the weighted homogeneous initial polynomial of $g$);
\item
$\mbox{Sing}(V(f))\cap V(h_P)=\{\mathbf{0}\}$, where $h_P$ is the weighted homogeneous initial polynomial of~$h$.
\end{enumerate} 
Here, $\Gamma_+(h)$ and $\Gamma_+(f)$ denote the Newton polyhedra of $h$ and $f$, respectively, and $d(P,h)$ is the minimal value of the map 
\begin{equation*}
(u,v,w)\in \Gamma_+(h)\subseteq\mathbb{R}^3_+\mapsto p_1u+p_2v+p_3w \in\mathbb{N}. 
\end{equation*}
To ensure that $\mathcal{W}''_{P,d,m}(\Delta)$ (respectively, $\mathcal{W}''_{P,d,m}(\Delta,\Xi)$) is finite-dimensional, we additionally assume that there is an arbitrary large, fixed integer $M\geq d+m$ such that for any element $g=f+h$ in $\mathcal{W}''_{P,d,m}(\Delta)$ (respectively, in $\mathcal{W}''_{P,d,m}(\Delta,\Xi)$), the degree of $h$ with respect to $P$ is less than or equal to $M$.

Note that, by~(2), the condition~(1) is equivalent to $\Gamma(g)=\Gamma(f)$ and it is always satisfied if $f$ is \emph{convenient} (i.e., if $\Gamma_+(f)$ meets each coordinate axis). Remark \ref{rem-singset} shows that the condition (3) is equivalent to the following one:
\begin{enumerate}
\item[($3'$)]
$\mbox{Sing}(V_P(f))\cap V_P(h_P)\cap\mathbb{P}^{2*}(P)=\emptyset$, 
where $\mbox{Sing}(V_P(f))$ is the singular locus of the weighted projective curve $V_P(f)$.
\end{enumerate} 
Also, observe that a polynomial $g=f+h$ in $\mathcal{W}''_{P,d,m}(\Delta)$ (respectively, in $\mathcal{W}''_{P,d,m}(\Delta,\Xi)$) always vanishes at $\mathbf{0}$ and has $f$ as face-function with respect to $P$.

\begin{example}
Let $g$ be a polynomial of the form $g=f+h$, where $f\in\mathcal{W}'_{P,d}(\Delta)$ is the weighted homogeneous initial polynomial of $g$ and $h$ is a polynomial such that $\Gamma_+(h)\subseteq\Gamma_+(f)$ and $d(P,h)=d+m$. If $g$ defines a \emph{weighted-L\^e-Yomdin singularity} at~$\mathbf{0}$ (i.e., if $\mbox{Sing}(V_P(f))\cap V_P(h_P)=\emptyset$, which is a slightly stronger condition than (3) and ($3'$)), then $g\in\mathcal{W}''_{P,d,m}(\Delta)$. 
\end{example}

\begin{example}
If $P$ is the trivial weight vector ${}^t(1,1,1)$ and if $g$ is a polynomial that defines a \emph{classical L\^e-Yomdin singularity} at $\mathbf{0}$ (i.e., if $\mbox{Sing}(V_P(g_d))\cap V_P(g_{d+m})=\emptyset$, where $g=g_d+g_{d+m}+\cdots$ is the homogeneous decomposition of $g$), then, modulo a linear change of coordinates, $f:=g_d$ and $h:=g_{d+m}+\cdots$ satisfy conditions (1), (2) and (3) above, and $f=g_d\in\mathcal{W}'_{P,d}(\Gamma(g_d))$, that is, $g\in\mathcal{W}''_{P,d,m}(\Gamma(g_d))$.
\end{example}

These examples motivate the following definition.

\begin{definition}\label{mdef}
The polynomials in the sets $\mathcal{W}''_{P,d,m}(\Delta)$ and $\mathcal{W}''_{P,d,m}(\Delta,\Xi)$ will be referred to as \emph{weak-Newton weighted-L\^e-Yomdin polynomials}.
\end{definition}

Here, the term ``weak-Newton'' refers to the Newton non-degeneracy condition (relative to the proper faces of $\Delta$) imposed on the initial weighted homogeneous polynomial $f$ of  any element $g=f+h$ in $\mathcal{W}''_{P,d,m}(\Delta)$ or $\mathcal{W}''_{P,d,m}(\Delta,\Xi)$.

\subsection{The set $\mathcal{W}''_{P,d,m}(\Delta,\Xi)_{\textnormal{ND}}$ of Newton weighted-L\^e-Yomdin polynomials}
In the special case where the plane curve singularities defining the isomorphism classes $\xi_i$ in the list $\Xi$ are all given by \emph{Newton non-degenerate} functions, we want to consider those elements in $\mathcal{W}'_{P,d}(\Delta,\Xi)$ whose associated curves exhibit singularities not merely topologically characterized by those in the list $\Xi$, but which additionally satisfy a more stringent condition, as specified in Definition \ref{def-spaceND} below. To reflect this strengthened condition, we shall denote the resulting space by $\mathcal{W}'_{P,d}(\Delta,\Xi)_{\text{ND}}$, and the associated space of weak-Newton weighted-L\^e-Yomdin polynomials by $\mathcal{W}''_{P,d,m}(\Delta,\Xi)_{\text{ND}}$. (Here, the letters ``ND'' stand for ``Newton non-degenerate''.)

To make this precise, given $f\in \mathcal{W}'_{P,d}(\Delta,\Xi)$, take---just as in section \ref{sect-xilist}---a regular simplicial cone subdivision $\Sigma^*$ of the dual Newton diagram $\Gamma^*(f)$, and consider the toric modification $\pi\colon X\to \mathbb{C}^3$ associated with $\Sigma^*$. Pick a cone $\sigma:=\mbox{Cone}(P,Q,R)$ of $\Sigma^*$ which includes $P$ as a vertex, and consider the corresponding toric chart $\mathbb{C}^3_\sigma$ of $X$ with coordinates $(u,v,w)$. Finally, assume again that the divisors $\hat E(P)$, $\hat E(Q)$ and $\hat E(R)$ are defined by $u=0$, $v=0$ and $w=0$, respectively. Then the sets $\mathcal{W}'_{P,d}(\Delta,\Xi)_{\text{ND}}$ and $\mathcal{W}''_{P,d,m}(\Delta,\Xi)_{\text{ND}}$ are defined as follows.

\begin{definition}\label{def-spaceND}
Suppose that each $\xi_i \in \Xi$ is defined by a \emph{Newton non-degenerate} function $\varphi_i$.
\begin{enumerate}
\item
A polynomial $f\in\mathcal{W}'_{P,d}(\Delta,\Xi)$ is said to be in $\mathcal{W}'_{P,d}(\Delta,\Xi)_{\text{ND}}$ if for each $\varrho_i\in\mbox{Sing}(E(P,f))$ such that $(E(P,f),\varrho_i)\simeq\xi_i$ ($1\leq i\leq k$), there exists an admissible\footnote{By \emph{admissible coordinate chart}, we mean an analytic chart $(U_{\varrho_i},(u',v',w'))$ such that $u'=u$ and $(v',w')$ is an analytic coordinate change of $(v,w)$.} coordinate chart $U_{\varrho_i}$ of $X$ at $\varrho_i$, with local coordinates $(u,v',w')$, in which $\hat E(P)$ is defined by $u=0$ and the pull-back $\pi^*f$ of $f$ by $\pi$ is given by 
\begin{equation*}
\pi^*f(u,v',w')={u}^d\phi_i(v',w'), 
\end{equation*}
where  the defining function $\phi_i(v',w')$ of $E(P,f)$ is Newton non-degenerate and such that the Newton boundaries $\Gamma(\phi_i)$ and $\Gamma(\varphi_i)$ coincide.
\item 
A polynomial $g=f+h\in\mathcal{W}''_{P,d,m}(\Delta,\Xi)$ is defined to be in $\mathcal{W}''_{P,d,m}(\Delta,\Xi)_{\text{ND}}$ if its weighted homogeneous initial polynomial $f$ belongs to $\mathcal{W}'_{P,d}(\Delta,\Xi)_{\text{ND}}$. 
\item 
The polynomials $g$ in $\mathcal{W}''_{P,d,m}(\Delta,\Xi)_{\text{ND}}$ will be referred to as \emph{Newton weighted-L\^e-Yomdin polynomials}, and singularities defined by such polynomials will be called \emph{Newton weighted-L\^e-Yomdin singularities}.
\end{enumerate}
\end{definition}

\section{Constructibility of the sets $\mathcal{W}'_{P,d}(\Delta,\Xi)$ and $\mathcal{W}''_{P,d,m}(\Delta,\Xi)$}\label{sect-constructibility}

The result presented in this section will be useful for establishing the main theorems of this paper in subsequent sections.

It is easy to see that $\mathcal{W}'_{P,d}(\Delta)$ and $\mathcal{W}''_{P,d,m}(\Delta)$ are constructible sets. Hereafter, we show the more subtle assertion that the sets $\mathcal{W}'_{P,d}(\Delta,\Xi)$ and $\mathcal{W}''_{P,d,m}(\Delta,\Xi)$ are constructible too. To this end, we introduce two  auxiliary sets $\mathcal{W}'_{P,d}(\Delta,\bar\mu)$ and $\mathcal{W}''_{P,d,m}(\Delta,\bar\mu)$ defined via the following procedure.
For each $1\leq i\leq k$, let $\mu_i$ denote the Milnor number associated with the isomorphism class $\xi_i\in\Xi$, and let $\bar\mu$ be the (unordered) multi-set $\bar\mu:=\{\mu_1,\ldots,\mu_k\}$. In analogy with the definitions of $\mathcal{W}'_{P,d}(\Delta,\Xi)$ and $\mathcal{W}''_{P,d,m}(\Delta,\Xi)$, we define subsets 
\begin{equation*}
\mathcal{W}'_{P,d}(\Delta,\bar\mu)\subseteq \mathcal{W}'_{P,d}(\Delta)
\quad\mbox{and}\quad
\mathcal{W}''_{P,d,m}(\Delta,\bar\mu) \subseteq\mathcal{W}''_{P,d,m}(\Delta)
\end{equation*}
in the following way.

\begin{definition}
A polynomial $f\in\mathcal{W}'_{P,d}(\Delta)$ is in $\mathcal{W}'_{P,d}(\Delta,\bar\mu)$ if $V_P(f)$ has $k$ isolated singular points $\rho_1,\ldots,\rho_k$ in $\mathbb{P}^{2*}(P)$ that produce $k$ isolated singularities $\varrho_1,\ldots,\varrho_k$ of $E(P,f)$ in $\hat E(P)^*$ such that for any $1\leq i\leq k$ the Milnor number of the (equivalent) germs $(V_P(f),\rho_i) $ and $(E(P,f),\varrho_i)$ is $\mu_i$.

A polynomial $g=f+h\in \mathcal{W}''_{P,d,m}(\Delta)$ is in $\mathcal{W}''_{P,d,m}(\Delta,\bar\mu)$ if and only if $f$ is in $\mathcal{W}'_{P,d}(\Delta,\bar\mu)$.
\end{definition}

Note that, in general, $\mathcal{W}'_{P,d}(\Delta,\bar\mu)$ and $\mathcal{W}''_{P,d,m}(\Delta,\bar\mu)$ are larger sets than $\mathcal{W}'_{P,d}(\Delta,\Xi)$ and $\mathcal{W}''_{P,d,m}(\Delta,\Xi)$, respectively. 

The main result of this section is stated as follows.

\begin{proposition}\label{prop-constructible}
The sets $\mathcal{W}'_{P,d}(\Delta,\bar\mu)$ and $\mathcal{W}''_{P,d,m}(\Delta,\bar\mu)$ are constructible, and the subsets $\mathcal{W}'_{P,d}(\Delta,\Xi)$ and $\mathcal{W}''_{P,d,m}(\Delta,\Xi)$ are unions of path-connected components of $\mathcal{W}'_{P,d}(\Delta,\bar\mu)$ and $\mathcal{W}''_{P,d,m}(\Delta,\bar\mu)$, respectively. In particular, $\mathcal{W}'_{P,d}(\Delta,\Xi)$ and $\mathcal{W}''_{P,d,m}(\Delta,\Xi)$ are also constructible.
\end{proposition}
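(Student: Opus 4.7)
The plan is to build on the evident constructibility of $\mathcal{W}'_{P,d}(\Delta)$ and $\mathcal{W}''_{P,d,m}(\Delta)$, refine it by stratifying according to the Milnor numbers of the singular points of $V_P(f)\cap\mathbb{P}^{2*}(P)$, and then apply the L\^e--Ramanujam theorem for plane curve germs to upgrade from the multi-set of Milnor numbers to the list $\Xi$ of isomorphism classes.

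First I would introduce the incidence variety
\begin{equation*}
\mathcal{I}:=\{(f,\rho)\in \mathcal{W}'_{P,d}(\Delta)\times \mathbb{P}^{2*}(P)\, ;\, \rho\in\mathrm{Sing}(V_P(f))\},
\end{equation*}
which, read in any affine chart of $\mathbb{P}^{2*}(P)$, is cut out by the simultaneous vanishing of a dehomogenized $f$ and its partials, hence is constructible. By Remark \ref{rem-singset} the projection $\mathrm{pr}_1\colon \mathcal{I}\to \mathcal{W}'_{P,d}(\Delta)$ has finite fibers. For each $n\geq 0$ the subset $\mathcal{I}_n:=\{(f,\rho)\in\mathcal{I}\, ;\, \mu(V_P(f),\rho)=n\}$ is constructible, since the Milnor number at $\rho$ is the $\mathbb{C}$-dimension of a local Jacobian algebra, and the condition that this dimension equals $n$ is expressible, on each affine chart, by the vanishing of appropriate minors of a finite jet-truncated Jacobian matrix together with the non-vanishing of one smaller minor. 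Applying Chevalley's theorem to the finite projections $\mathrm{pr}_1|_{\mathcal{I}_n}$, the condition that $V_P(f)\cap\mathbb{P}^{2*}(P)$ has exactly $k$ singularities whose unordered Milnor numbers form the multi-set $\bar\mu=\{\mu_1,\ldots,\mu_k\}$ is a finite Boolean combination of such constructible fiber-counting constraints. This establishes constructibility of $\mathcal{W}'_{P,d}(\Delta,\bar\mu)$, and the constructibility of $\mathcal{W}''_{P,d,m}(\Delta,\bar\mu)$ follows at once because the passage $(f,h)\mapsto f$ is a linear surjection and the remaining constraints defining $\mathcal{W}''_{P,d,m}(\Delta)$ are visibly constructible.

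Next, to show $\mathcal{W}'_{P,d}(\Delta,\Xi)$ is a union of path-connected components of $\mathcal{W}'_{P,d}(\Delta,\bar\mu)$, I would take a continuous path $\{f_s\}_{0\leq s\leq 1}$ in the latter with $f_0\in \mathcal{W}'_{P,d}(\Delta,\Xi)$. Because the total Milnor number $\sum_i\mu_i$ is preserved and Milnor numbers are upper semicontinuous under deformation, no collision or splitting of singular points can occur: a cluster near some $\rho_i(s_0)$ splitting off would either decrease the total or produce a multi-set of Milnor numbers different from $\bar\mu$, contradicting membership in $\mathcal{W}'_{P,d}(\Delta,\bar\mu)$. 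Hence there are $k$ continuously varying singular points $\rho_i(s)\in V_P(f_s)\cap\mathbb{P}^{2*}(P)$, each of constant Milnor number $\mu_i$. By the L\^e--Ramanujam theorem for \emph{plane curve} singularities (the one-dimensional case, which is fully settled), $\mu$-constancy along a one-parameter family of reduced plane curve germs entails constancy of the embedded topological type. Therefore the isomorphism class of $(V_P(f_s),\rho_i(s))$ equals $\xi_i$ for every $s$, so the whole path lies in $\mathcal{W}'_{P,d}(\Delta,\Xi)$. The same argument applies verbatim to $\mathcal{W}''_{P,d,m}(\Delta,\Xi)$ inside $\mathcal{W}''_{P,d,m}(\Delta,\bar\mu)$ by looking at the initial form $f$. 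Finally, since a complex constructible set admits a finite Whitney stratification by smooth locally closed algebraic strata, its Euclidean connected components are unions of strata and thus constructible; hence any union of such components is constructible, yielding the ``in particular'' clause.

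The main obstacle I anticipate is the uniform, chart-independent verification that the Milnor-number-stratum $\mathcal{I}_n$ is constructible in families---essentially a standard but slightly technical statement about the constructibility of fiberwise dimensions of Jacobian quotient algebras. The semicontinuity argument ruling out splitting/merging of singular points along $\bar\mu$-constant paths, while intuitive, also needs careful handling so that the multi-set interpretation is correctly preserved; everything else reduces to a clean invocation of L\^e--Ramanujam in dimension one and Whitney stratification.
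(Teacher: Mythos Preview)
Your argument is correct, but the route differs from the paper's in two places. For the constructibility of $\mathcal{W}'_{P,d}(\Delta,\bar\mu)$, the paper does not work directly in $\mathbb{P}^{2*}(P)$: it pulls back through the $p_1p_2p_3$-fold branched cover $\psi\colon\mathbb{P}^2\to\mathbb{P}^2(P)$, $[x\!:\!y\!:\!z]\mapsto[x^{p_1}\!:\!y^{p_2}\!:\!z^{p_3}]_P$, replacing $f$ by the ordinary homogeneous polynomial $F=f(x^{p_1},y^{p_2},z^{p_3})$ so that Milnor numbers can be read off in standard affine charts of $\mathbb{P}^2$; and to encode globally that there are \emph{exactly} $k$ singular points (no merging), it introduces an auxiliary weighted-L\^e--Yomdin function $g=f+h$ and uses the formula $\mu(g)=(d/p_1-1)(d/p_2-1)(d/p_3-1)+m\,\mu^{\mathrm{tot}}$ from \cite{ABLMH} to rewrite this as membership in a single constructible set $W(3,\delta,\mu_+)$. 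Your incidence-variety plus fiber-counting via Chevalley is more direct and avoids the auxiliary $g$, but your phrase ``any affine chart of $\mathbb{P}^{2*}(P)$'' needs care when $p_3>1$, since then $\mathbb{P}^2(P)$ has no affine coordinate chart in the sense of Section~\ref{sect-wpswpc}; you should instead invoke the torus identification $\hat E(P)^*\cong\mathbb{P}^{2*}(P)$ of \eqref{analyiso} (or the paper's covering trick) to obtain honest polynomial local equations. For the ``union of path-connected components'' clause the paper simply declares it immediate; your no-splitting argument followed by L\^e's theorem \cite{Le2} for plane curves is exactly the intended reasoning (cf.\ Section~\ref{proofmt2}), though since L\^e's result is formulated for analytic families you should first reduce to piecewise complex-analytic paths via \cite[Theorem~5.4]{EO2}, which is legitimate once $\mathcal{W}'_{P,d}(\Delta,\bar\mu)$ is known to be constructible.
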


Proposition \ref{prop-constructible} is proved is section \ref{proof-prop-constructible}.

\section{$\mu$-Zariski pairs of surface singularities}\label{sect-wzpss}

In this section, using Newton weighted-L\^e-Yomdin polynomials, we construct new $\mu$-Zariski pairs of surface singularities (see Theorem \ref{mt3}). A pivotal component of this construction involves showing that for any $\mu$-constant piecewise complex-analytic family $\{g_s\}_{0\leq s\leq 1}$ of weak-Newton weighted-L\^e-Yomdin polynomials $g_s=f_s+h_s$, the singularities of the weighted projective curve $V_P(f_s)$ remain stable as the parameter $s$ varies (see Theorem \ref{mt2}). Another key step in the proof of Theorem \ref{mt3} is a variant of Theorem \ref{mt1}, wherein the set $\mathcal{W}'_{P,d}(\Delta)$ is replaced by $\mathcal{W}''_{P,d,m}(\Delta)$ (see Theorem \ref{remthm1}). Lastly, a crucial property also used in establishing Theorem \ref{mt3} is that elements of $\mathcal{W}'_{P,d}(\Delta,\Xi)$ lying in the same path-connected component cannot produce Zariski pair of curves in $\mathbb{P}^2(P)$ (see Proposition \ref{prop-samecc}).

\subsection{Zariski pairs of curves in weighted projective planes}\label{sect-zpcwpp}
Our approach to constructing $\mu$-Zariski pairs of surface singularities in $\mathbb{C}^3$ begins with the concept of Zariski pairs of curves in the weighted projective plane $\mathbb{P}^2(P)$. So, let us first define precisely what we mean by a Zariski pair of curves in $\mathbb{P}^2(P)$. 

For any non-empty subset $I\subseteq\{1,2,3\}$, define 
\begin{align*}
& \mathbb{P}^{I}(P):=\{[\zeta_1\!:\!\zeta_2\!:\!\zeta_3]_P\in \mathbb{P}^2(P)\, ;\, \zeta_i=0\mbox{ if } i\notin I\},\\
& \mathbb{P}^{*I}(P):=\{[\zeta_1\!:\!\zeta_2\!:\!\zeta_3]_P\in \mathbb{P}^2(P)\, ;\, \zeta_i=0\mbox{ if and only if } i\notin I\},
\end{align*}
where $\zeta_1:=x$, $\zeta_2:=y$ and $\zeta_3:=z$, and $[\zeta_1\!:\!\zeta_2\!:\!\zeta_3]_P$ denotes the equivalence class of $(\zeta_1,\zeta_2,\zeta_3)$ in $\mathbb{P}^2(P)$. Then pick $f_0$ and $f_1$ in $\mathcal{W}'_{P,d}(\Delta,\Xi)$, and for $j=0,1$ put 
\begin{equation*}
V_P^I(f_j):= V_P(f_j) \cap \mathbb{P}^{I}(P)
\quad\mbox{and}\quad
V_P^{*I}(f_j):= V_P(f_j) \cap \mathbb{P}^{*I}(P).
\end{equation*}
The collections of subsets $\mathbb{P}^{*I}(P)$ and $V_P^{*I}(f_j)$ for $I\subseteq\{1,2,3\}$, $I\not=\emptyset$, give toric stratifications of $\mathbb{P}^{2}(P)$ and $V_P(f_j)$, respectively:
\begin{equation}\label{toricstratifications}
\mathbb{P}^{2}(P)=\bigsqcup_{\emptyset\not=I\subseteq\{1,2,3\}}\mathbb{P}^{*I}(P)
\quad\mbox{and}\quad
V_P(f_j)=\bigsqcup_{\emptyset\not=I\subseteq\{1,2,3\}}V_P^{*I}(f_j).
\end{equation}

We introduce a class of homeomorphisms of pairs 
\begin{equation*}
\phi\colon (\mathbb{P}^{2}(P),V_P(f_0))\to (\mathbb{P}^{2}(P),V_P(f_1)), 
\end{equation*}
called \emph{admissible homeomorphisms}, as follows.

\begin{definition}
A homeomorphism of pairs $\phi\colon (\mathbb{P}^{2}(P),V_P(f_0))\to (\mathbb{P}^{2}(P),V_P(f_1))$ is called \emph{admissible} if for every non-empty subset $I\subseteq\{1,2,3\}$, we have 
\begin{equation*}
\phi(\mathbb{P}^{I}(P),V_P^I(f_0))=(\mathbb{P}^{I}(P),V_P^I(f_1)).
\end{equation*}
\end{definition}

In particular, if such a homeomorphism of pairs exists, then the restrictions
\begin{align*}
& \phi\colon (\mathbb{P}^{2*}(P),V_P^*(f_0))\to (\mathbb{P}^{2*}(P),V_P^*(f_1))\\
\mbox{and}\quad
& \phi\colon (U_z,U_z\cap V_P(f_0))\to (U_z,U_z\cap V_P(f_1))
\end{align*}
are homeomorphisms of pairs too, where 
\begin{align*}
& \mathbb{P}^{2*}(P):= \mathbb{P}^{2}(P)\setminus \{xyz=0\} = \mathbb{P}^{\{1,2,3\}}(P),\\
& V_P^*(f_j):=\mathbb{P}^{2*}(P)\cap V_P(f_j)=V_P^{\{1,2,3\}}(f_j),\\
\mbox{and}\quad & U_z:=\mathbb{P}^{2}(P)\setminus \{z=0\}=\bigcup_{3\in I\subseteq\{1,2,3\}}\mathbb{P}^{*I}(P).
\end{align*}
Here, note that if $p_3=1$, then $U_z$ is the affine coordinate chart associated to $z$ (see section \ref{sect-wpswpc}). Within the preceding notation, care must be taken not to confuse the space $\mathbb{P}^{2*}(P)$, introduced in \eqref{def-p2s}, with the space 
\begin{align*}
\mathbb{P}^{*\{2\}}(P)=\{[x\!:\!y\!:\!z]_P\in \mathbb{P}^2(P)\, ;\, x=z=0,\,y\not=0\}.
\end{align*}

We can now precisely formulate the definition of a Zariski pair of curves in $\mathbb{P}^2(P)$ as follows.

\begin{definition}\label{def-zpwpc}
We say that two elements $f_0$ and $f_1$ in $\mathcal{W}'_{P,d}(\Delta,\Xi)$ form a \emph{Zariski pair of curves in the weighted projective plane $\mathbb{P}^2(P)$} if the following two conditions are satisfied:
\begin{enumerate}
\item
there exist regular neighbourhoods $N_0$ and $N_1$ of $V_P(f_0)$ and $V_P(f_1)$, respectively, such that $(N_0,V_P(f_0))$ and $(N_1,V_P(f_1))$ are homeomorphic (equivalently, $V_P(f_0)$ and $V_P(f_1)$ have the same ``combinatoric'' in the sense of \cite{Artal,ACT});
\item
there is no admissible homeomorphism between the pairs $(\mathbb{P}^{2}(P),V_P(f_0))$ and $(\mathbb{P}^{2}(P),V_P(f_1))$.
\end{enumerate}
\end{definition}

Note that being a Zariski pair of curves in $\mathbb{P}^2(P)$ is a \emph{global} notion.

\begin{example}
If $f_0,f_1\in\mathcal{W}'_{P,d}(\Delta,\Xi)$ have the same combinatoric and if the fundamental groups $\pi_1(\mathbb{P}^{*I}(P)\setminus V_P^{*I}(f_0))$ and $\pi_1(\mathbb{P}^{*I}(P)\setminus V_P^{*I}(f_1))$ are not isomorphic for some non-empty subset $I\subseteq \{1,2,3\}$---in particular, if  $\pi_1(\mathbb{P}^{2*}(P)\setminus V_P^*(f_0))\not\simeq \pi_1(\mathbb{P}^{2*}(P)\setminus V_P^*(f_1))$ or if $\pi_1(U_z,U_z\cap V_P(f_0))\not\simeq\pi_1(U_z,U_z\cap V_P(f_1))$---then $f_0$ and $f_1$ form a Zariski pair of curves in $\mathbb{P}^2(P)$. 
\end{example}

\begin{remark}
In the special case where $P$ is the trivial weight vector ${}^t(1,1,1)$, so that $\mathbb{P}^2(P)=\mathbb{P}^2$, the definition of Zariski pairs of curves presented above is more restrictive than the classical definition given in \cite{Artal}, which, beyond satisfying condition (1), requires only the non-existence of a homeomorphism between the pairs $(\mathbb{P}^{2}(P),V_P(f_0))$ and $(\mathbb{P}^{2}(P),V_P(f_1))$. In other words, for $P={}^t(1,1,1)$ the classical notion as defined in \cite{Artal} encompasses a broader class than the one considered in this work. Hereafter, unless explicitly stated otherwise, any reference to Zariski pairs in the standard projective plane $\mathbb{P}^2$ is to be understood in the usual sense of \cite{Artal}. By contrast, when $P\not={}^t(1,1,1)$, it is Definition \ref{def-zpwpc} that shall prevail.
\end{remark}

We also have the following important proposition, which plays a critical role in the proof of Theorem \ref{mt3} about $\mu$-Zariski pairs.

\begin{proposition}\label{prop-samecc}
Assume that $f_0$ and $f_1$ are two polynomials belonging to the same path-connected component of $\mathcal{W}'_{P,d}(\Delta,\Xi)$. Then there exists an admissible homeomorphism between the pairs $(\mathbb{P}^{2}(P),V_P(f_0))$ and $(\mathbb{P}^{2}(P),V_P(f_1))$.
\end{proposition}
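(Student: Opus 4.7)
The plan is to combine the constructibility of $\mathcal{W}'_{P,d}(\Delta,\Xi)$ established in Proposition \ref{prop-constructible} with Thom--Mather's first isotopy lemma, producing the admissible homeomorphism as the time-$1$ map of a stratified ambient isotopy. First, since $\mathcal{W}'_{P,d}(\Delta,\Xi)$ is constructible, the combination of \cite[Proposition 5.2 and Theorem 5.4]{EO2} with Remark \ref{rem-cpca} implies that its path-connected components coincide with its piecewise complex-analytic path-connected components. Hence $f_0$ and $f_1$ can be joined by a piecewise complex-analytic path $\{f_s\}_{s\in[0,1]}$ in $\mathcal{W}'_{P,d}(\Delta,\Xi)$, and by concatenation of the resulting homeomorphisms it suffices to treat the case of a single complex-analytic arc.

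Along such an arc, the $k$ singular points $\rho_1(s),\ldots,\rho_k(s)$ of $V_P(f_s)\cap \mathbb{P}^{2*}(P)$ depend analytically on $s$, and each germ $(V_P(f_s),\rho_i(s))$ remains in the fixed isomorphism class $\xi_i$. Consider the total space
\[
\mathcal{V} := \{(s,\rho)\in [0,1]\times\mathbb{P}^2(P) : f_s(\rho)=0\},
\]
and endow $[0,1]\times\mathbb{P}^2(P)$ with a Whitney stratification $\mathcal{S}$ that refines simultaneously the pull-back of the toric stratification $\{\mathbb{P}^{*I}(P)\}_{\emptyset\ne I\subseteq\{1,2,3\}}$ and $\mathcal{V}$. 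The natural strata are the open complements of $\mathcal{V}$ inside each $[0,1]\times\mathbb{P}^{*I}(P)$; the smooth loci of the intersections $V_P^{*I}(f_s)$, as $s$ varies; and the analytic sections $\{(s,\rho_i(s)) : s\in[0,1]\}$ of isolated singular points, which by Remark \ref{rem-singset} all lie in $[0,1]\times\mathbb{P}^{*\{1,2,3\}}(P)$ where $\mathbb{P}^2(P)$ itself is smooth.

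Whitney's conditions along the generic strata are standard. The crucial point is Whitney regularity along each $1$-dimensional stratum $\{(s,\rho_i(s))\}$: the ambient $\mathbb{P}^2(P)$ is smooth there, and $(V_P(f_s),\rho_i(s))$ is an isolated plane curve singularity whose topological type $\xi_i$ is constant along the family. Topological equisingularity in a complex-analytic family of isolated plane curve germs is equivalent to $\mu$-constancy, and by the theorems of L\^e--Ramanujam and Teissier this forces Whitney equisingularity of the family of germs, so condition (b) holds along the arc. Applying Thom--Mather's first isotopy lemma to the proper projection $[0,1]\times\mathbb{P}^2(P)\to[0,1]$---which is a submersion on each stratum of $\mathcal{S}$---yields a stratified ambient isotopy whose time-$1$ map is the required admissible homeomorphism $\phi\colon (\mathbb{P}^2(P),V_P(f_0))\to (\mathbb{P}^2(P),V_P(f_1))$; admissibility is automatic because $\phi$ preserves each $\mathbb{P}^{*I}(P)$, hence also each $\mathbb{P}^I(P)$, by construction. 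The main technical obstacle is the simultaneous verification of Whitney regularity along the singular sections and of compatibility with the toric strata; this is made tractable precisely by the fact that the $\rho_i(s)$ remain in the smooth open stratum $\mathbb{P}^{*\{1,2,3\}}(P)$, so that the classical plane-curve theory of equisingular families applies without modification, while the toric strata themselves form a Whitney stratification of $\mathbb{P}^2(P)$ that the singular sections are disjoint from.
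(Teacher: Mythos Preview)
Your argument shares the paper's opening reduction---constructibility of $\mathcal{W}'_{P,d}(\Delta,\Xi)$ gives a piecewise complex-analytic arc, and along the singular sections $\mu$-constancy yields Whitney regularity via Teissier's theorem---but diverges at the key technical step. You apply Thom--Mather's first isotopy lemma directly on $[0,1]\times\mathbb{P}^2(P)$; the paper instead pulls back along the $p_1p_2p_3$-fold branched cover $\psi\colon\mathbb{P}^2\to\mathbb{P}^2(P)$, $[x\!:\!y\!:\!z]\mapsto[x^{p_1}\!:\!y^{p_2}\!:\!z^{p_3}]_P$, constructs a stratified vector field on $\mathcal{I}^{\pm}_{a,\varepsilon}\times\mathbb{P}^2$ (locally near each singular trajectory via a Puiseux parametrization followed by Teissier's $\mu^*$-constant $\Rightarrow$ Whitney~(b) theorem, then glued by partition of unity), averages it over the deck group $G=\mu_{p_1}\times\mu_{p_2}\times\mu_{p_3}$ to make it $G$-invariant, and lets the resulting flow descend to the quotient $\mathbb{P}^2(P)$.

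This divergence is where your proof has a gap. Thom--Mather's lemma is formulated for Whitney-stratified subsets of a \emph{smooth} ambient manifold, whereas $\mathbb{P}^2(P)$ has quotient singularities at the coordinate points whenever some $p_i>1$. You note that the toric strata give a Whitney stratification of $\mathbb{P}^2(P)$ and that the sections $\rho_i(s)$ remain in the smooth open stratum $\mathbb{P}^{2*}(P)$; but neither observation produces a controlled vector field, nor an integrable flow, on $\mathbb{P}^2(P)$ near its singular points---and the curve $V_P(f_s)$ may well pass through those points. To run your direct approach one must either embed $\mathbb{P}^2(P)\hookrightarrow\mathbb{P}^N$ and verify Whitney~(b) in the smooth ambient near each coordinate point (including between the curve strata and the lower toric strata along $\{xyz=0\}$), or invoke a Thom--Mather statement for abstract stratified spaces; you do neither. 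The paper explicitly flags exactly this obstacle (``due to the presence of singularities in $\mathbb{P}^2(P)$, such a direct construction proves to be challenging''), and the lift-and-average manoeuvre is precisely its resolution. Your outline can likely be completed along either of the two routes just mentioned, but as written the step that handles the singular ambient is missing.
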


This proposition underscores the relevance of our definition of Zariski pairs of curves in $\mathbb{P}^2(P)$,  in the sense that if the space $\mathcal{W}'_{P,d}(\Delta,\Xi)$ is path-connected, then it does not produce any Zariski pair---an usual property in the classical setting of Zariski pairs in $\mathbb{P}^2$, as defined in \cite{Artal}.

Proposition \ref{prop-samecc} is proved in section \ref{proof-prop-samecc}.

\subsection{From Zariski pairs of curves in $\mathbb{P}^2$ to those in $\mathbb{P}^2(P)$}

In this section, we explain a method that, for each classical \emph{strong} Zariski pair of curves in the standard projective space, associates a corresponding  Zariski pair in a weighted projective plane of special type (see Theorem \ref{mt4bis} and Corollary \ref{mt4}). Essentially, this construction comes from \cite{O5}.

Let $h_0(x,y,z)$ and $h_1(x,y,z)$ be two reduced homogeneous polynomials of degree $d$, and let $V_{P_0}(h_0)$ and $V_{P_0}(h_1)$ be the corresponding curves in the standard projective plane $\mathbb{P}^2(P_0)\equiv\mathbb{P}^2$, where $P_0:={}^t(1,1,1)$ is the trivial weight vector.
By a linear change of coordinates, we may assume that for $j=0,1$, the singularities of $V_{P_0}(h_j)$ are not located on $xyz=0$, and the polynomials $h_j(0,y,z)$, $h_j(x,0,z)$ and $h_j(x,y,0)$ are convenient and Newton non-degenerate. Without loss of generality, we may also suppose that the line $y=0$ is generic with respect to $V_{P_0}(h_j)$.
Put 
\begin{equation*}
H_j(x,y):=h_j(x,y,1)
\quad\mbox{and}\quad
F_j(x,y):=H_j(x,y^{p_1}), 
\end{equation*}
and consider the weighted homogenization $f_j$ of $F_j$ with respect to the weight vector $P:={}^t(p_1,1,1)$, namely:
\begin{equation*}
f_j(x,y,z):=z^{p_1d}\cdot F_j\Big(\frac{x}{z^{p_1}},\frac{y}{z}\Big),
\end{equation*}
which has $P$-degree $p_1d$. 
Note that $p_3=1$, and therefore $U_z:=\mathbb{P}^2(P)\setminus\{z=0\}$ is an affine coordinate chart (see section \ref{sect-wpswpc}). Also, observe that $\Gamma(f_0)=\Gamma(f_1)$ has a unique $2$-dimensional face $\Delta$, and the singularities  of the weighted projective curves $V_P(f_0)$ and $V_P(f_1)$ are completely determined by those of the projective curve $V_{P_0}(h_0)$ and $V_{P_0}(h_1)$. More precisely, each singular point of $V_{P_0}(h_0)$ and $V_{P_0}(h_1)$ is duplicated $p_1$ times in $V_P(f_0)$ and $V_P(f_1)$, respectively. 
The curves $V_P(f_0)$ and $V_P(f_1)$ may inherit quotient singularities from the ambient weighted projective plane $\mathbb{P}^2(P)$, even if their defining equations are locally regular at these points. Nevertheless, since both $V_{P_0}(h_0)$ and $V_{P_0}(h_1)$ are non-singular along the locus $xyz=0$ in $\mathbb{P}^2$, the singularities induced on $V_P(f_0)$ and $V_P(f_1)$ by the singular points of $\mathbb{P}^2(P)$ are equivalent.
Finally, as noted in section \ref{subsect-wh}, the above assumptions imply that $f_0$ and $f_1$ are Newton non-degenerate on every proper face of $\Delta$. By combining these observations with \cite[Theorem (3.4)]{O5}, we arrive at the following theorem, which is essentially Theorem (5.8) of \cite{O5}.

\begin{theorem}\label{mt4bis}
Assume that $h_0$ and $h_1$ form a Zariski pair of curves in the standard projective plane $\mathbb{P}^2$. Take a generic line $L_\infty$ for both $V_{P_0}(h_0)$ and $V_{P_0}(h_1)$, and put $\mathbb{C}^2:=\mathbb{P}^2\setminus L_\infty$. If the fundamental groups $\pi_1(\mathbb{C}^2\setminus V_{P_0}(h_0))$ and $\pi_1(\mathbb{C}^2\setminus V_{P_0}(h_1))$ are not isomorphic---so that, by definition, $h_0$ and $h_1$ form a \emph{strong generic affine} Zariski pair of curves in $\mathbb{P}^2$ (see \cite[\S 5 (C)]{O5})---then $f_0$ and $f_1$ form a Zariski pair of curves in the weighted projective plane $\mathbb{P}^2(P)$, where $P:={}^t(p_1,1,1)$.
\end{theorem}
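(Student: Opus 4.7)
I would prove the theorem by verifying the two conditions of Definition \ref{def-zpwpc}. For condition~(1), the same combinatoric follows directly from the construction: each singularity of $V_{P_0}(h_j) \subset \mathbb{P}^2$---which by hypothesis lies away from $\{xyz = 0\}$---is duplicated $p_1$-fold in $V_P(f_j)$ via the cyclic cover $y \mapsto y^{p_1}$, preserving topological type; while any singularities of $V_P(f_j)$ on $\{xyz = 0\}$ arise solely from the ambient quotient singularities of $\mathbb{P}^2(P)$, which are identical for $f_0$ and $f_1$. Since $(h_0, h_1)$ is a Zariski pair in $\mathbb{P}^2$, the two lists of topological types agree, giving the same combinatoric for $(f_0, f_1)$.

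For condition~(2), I would argue by contradiction. Suppose $\phi \colon (\mathbb{P}^2(P), V_P(f_0)) \to (\mathbb{P}^2(P), V_P(f_1))$ is admissible. Admissibility forces $\phi$ to preserve each coordinate stratum $\mathbb{P}^I(P)$; in particular, $\phi$ preserves $U_z = \mathbb{P}^2(P) \setminus \{z = 0\}$ together with the coordinate lines $\{x = 0\}$ and $\{y = 0\}$ inside it. Identifying $U_z \cong \mathbb{C}^2_{u,v}$ via $(u,v) = (x/z^{p_1}, y/z)$, we obtain a homeomorphism of quadruples
\[
(\mathbb{C}^2, \{F_0 = 0\}, \{u = 0\}, \{v = 0\}) \;\cong\; (\mathbb{C}^2, \{F_1 = 0\}, \{u = 0\}, \{v = 0\}).
\]

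Next I would invoke the cyclic branched cover $\tilde\pi \colon \mathbb{C}^2_{u,v} \to \mathbb{C}^2_{x,y}$, $(u,v) \mapsto (u, v^{p_1})$, a $p_1$-fold cover branched along $\{y = 0\}$ with $\tilde\pi^{-1}\{H_j = 0\} = \{F_j = 0\}$. Restricted to $\{uv \neq 0\}$, $\tilde\pi$ is an unbranched cyclic cover onto $\{xy \neq 0\}$, so the $\phi$-induced isomorphism on fundamental groups of the complements of $\{F_j = 0\} \cup \{uv = 0\}$ transfers to an isomorphism between index-$p_1$ subgroups of $\pi_1(\mathbb{C}^2 \setminus (\{H_j = 0\} \cup \{xy = 0\}))$. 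By the Oka--Sakamoto product theorem---applicable because the Newton non-degeneracy of $h_j$ on the coordinate axes makes $\{x = 0\}, \{y = 0\}$ transverse to $\{H_j = 0\}$---each of these latter groups is of the form $\pi_1(\mathbb{C}^2 \setminus \{H_j = 0\}) \times \mathbb{Z}^2$, and its index-$p_1$ subgroup is again of that form. Choosing $L_\infty := \{z = 0\}$ as a valid generic line (again, by Newton non-degeneracy of $h_j(x,y,0)$), we have $\pi_1(\mathbb{C}^2 \setminus V_{P_0}(h_j)) = \pi_1(\mathbb{C}^2 \setminus \{H_j = 0\})$, yielding
\[
\pi_1(\mathbb{C}^2 \setminus V_{P_0}(h_0)) \times \mathbb{Z}^2 \;\cong\; \pi_1(\mathbb{C}^2 \setminus V_{P_0}(h_1)) \times \mathbb{Z}^2.
\]

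The main obstacle is then to cancel the $\mathbb{Z}^2$ factor and conclude $\pi_1(\mathbb{C}^2 \setminus V_{P_0}(h_0)) \cong \pi_1(\mathbb{C}^2 \setminus V_{P_0}(h_1))$, which would contradict the strong generic affine Zariski pair hypothesis. Since the two $\mathbb{Z}$ summands correspond to canonical meridians around the coordinate axes, in this plane-curve-complement setting they can be isolated group-theoretically (for instance, via their centralizers or their position in the lower central series) and cancelled, as is done in \cite[Theorem (3.4)]{O5}. Once the cancellation is justified, the resulting contradiction completes the proof.
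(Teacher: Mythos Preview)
Your treatment of condition (1) matches the paper's argument. For condition (2), however, the paper takes a much shorter route: it invokes \cite[Theorem (3.4)]{O5} directly, which gives an \emph{unconditional} isomorphism
\[
\pi_1(U_z\setminus V_P(f_j))\;\simeq\;\pi_1(\mathbb{C}^2\setminus V_{P_0}(h_j)),\quad j=0,1.
\]
Since the right-hand sides are non-isomorphic by hypothesis, the pairs $(U_z,V_P(f_0))$ and $(U_z,V_P(f_1))$ cannot be homeomorphic, and hence no admissible homeomorphism exists. There is no need to argue by contradiction, to track the coordinate lines $\{u=0\}$ and $\{v=0\}$, to pass through the cyclic cover, or to appeal to Oka--Sakamoto; all of this is already packaged inside the cited result.

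Your longer route is essentially an attempt to re-prove that theorem, and the last step---cancelling the $\mathbb{Z}^2$---is where the argument is not secure as written. The implication $G_0\times\mathbb{Z}^2\cong G_1\times\mathbb{Z}^2\Rightarrow G_0\cong G_1$ fails for general finitely presented groups, and neither centralizers nor the lower central series will isolate the $\mathbb{Z}^2$ factor abstractly (for instance, if $G_j$ itself has centre or large abelian quotients). The cancellation you need is not group-theoretic but \emph{geometric}: because the hypothetical $\phi$ preserves the lines $\{u=0\}$ and $\{v=0\}$ as sets, the induced isomorphism on $\pi_1$ sends each meridian of a coordinate line to (a conjugate of) $\pm$ the corresponding meridian, hence maps the $\mathbb{Z}^2$ direct factor onto the $\mathbb{Z}^2$ direct factor, and one can pass to the quotient. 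If you want to keep your approach, replace the vague appeal to centralizers/LCS with this explicit meridian-tracking argument; otherwise, simply cite \cite[Theorem (3.4)]{O5} as the paper does and skip the detour.
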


\begin{proof}
As mentioned earlier, $f_0$ and $f_1$ have the same Newton boundary $\Delta$, and are Newton non-degenerate on the proper faces of $\Delta$. Moreover, since $V_{P_0}(h_0)$ and $V_{P_0}(h_1)$ share the same combinatoric, it follows from the above observations that $V_P(f_0)$ and $V_P(f_1)$ have the same combinatoric as well. In particular, $f_0$ and $f_1$ lie in $\mathcal{W}'_{P,p_1d}(\Delta,\Xi)$, where $\Xi$ is the set of isomorphism classes of the singularities of $V_P(f_0)$ and $V_P(f_1)$ that are located in $\mathbb{P}^{2*}(P)$. Now, by \cite[Theorem (3.4)]{O5}, we have
\begin{equation*}
\pi_1(U_z\setminus V_P(f_j))\simeq\pi_1(\mathbb{C}^2\setminus V_{P_0}(h_j))
\end{equation*}
for $j=0,1$. In particular, the pairs $(U_z,V_P(f_0))$ and $(U_z,V_P(f_1))$ are not homeomorphic, and therefore $f_0$ and $f_1$ form a Zariski pair of curves in $\mathbb{P}^2(P)$.
\end{proof}

Combined with \cite[Proposition (5.7)]{O5}, Theorem \ref{mt4bis} has the following corollary in case where $h_0$ and $h_1$ are \emph{irreducible}.

\begin{corollary}\label{mt4}
Assume that $h_0$ and $h_1$ are irreducible and form a Zariski pair of curves in $\mathbb{P}^2$ such that the embedded topologies of $V_{P_0}(h_0)$ and $V_{P_0}(h_1)$ are distinguished by the fundamental groups of their complements, that is, $\pi_1(\mathbb{P}^2\setminus V_{P_0}(h_0))$ and $\pi_1(\mathbb{P}^2\setminus V_{P_0}(h_1))$ are not isomorphic. (Such a Zariski pair is called a \emph{strong} Zariski pair, see \cite[\S 5 (C)]{O5}.)
Take a generic line $L_\infty$ for both $V_{P_0}(h_0)$ and $V_{P_0}(h_1)$, and again put $\mathbb{C}^2:=\mathbb{P}^2\setminus L_\infty$. If either $\pi_1(\mathbb{C}^2\setminus V_{P_0}(h_0))$ or $\pi_1(\mathbb{C}^2\setminus V_{P_0}(h_1))$ satisfies the \emph{(H.I.C.)-condition}\footnote{Following \cite[\S 3 (E)]{O5}, we say that a group $G$ satisfies the \emph{Homological Injectivity Condition of the centre} (or (H.I.C.)-condition in short) if $Z(G)\cap D(G)=\{e\}$, where $Z(G)$, $D(G)$ and $e$ are the centre, the commutator subgroup and the trivial element of the group $G$, respectively.}, then $f_0$ and $f_1$ form a Zariski pair of curves in the weighted projective plane $\mathbb{P}^2(P)$, where $P:={}^t(p_1,1,1)$.
\end{corollary}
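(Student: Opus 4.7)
The plan is to reduce this corollary to a direct application of Theorem \ref{mt4bis} to the pair $(h_0,h_1)$. Theorem \ref{mt4bis} requires that $(h_0,h_1)$ be a \emph{strong generic affine} Zariski pair, i.e.\ that the two affine fundamental groups $G_j := \pi_1(\mathbb{C}^2 \setminus V_{P_0}(h_j))$ ($j=0,1$) be non-isomorphic. The hypothesis of the corollary instead provides the non-isomorphism of the two \emph{projective} fundamental groups $\bar G_j := \pi_1(\mathbb{P}^2 \setminus V_{P_0}(h_j))$. The entire task is therefore to transfer non-isomorphism from the projective to the affine fundamental groups, using the (H.I.C.)-condition.

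This transfer is precisely the content of Proposition (5.7) of \cite{O5}. The underlying geometric point is that for irreducible $V_{P_0}(h_j)$ and generic $L_\infty$ one has a central extension
\begin{equation*}
1 \longrightarrow \langle \omega_j \rangle \longrightarrow G_j \longrightarrow \bar G_j \longrightarrow 1,
\end{equation*}
where $\omega_j$ is a meridian of $L_\infty$ lying in the centre of $G_j$. The (H.I.C.)-condition imposed on (say) $G_0$ rigidifies this extension, since it forces the generator $\omega_0$ of the central cyclic kernel to lie outside the commutator subgroup, so that the kernel can be characterised intrinsically in group-theoretic terms. Consequently, an isomorphism $G_0 \simeq G_1$ would descend to an isomorphism of the quotients $\bar G_0 \simeq \bar G_1$, contradicting the strong Zariski pair hypothesis. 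Therefore $G_0 \not\simeq G_1$.

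Once the affine fundamental groups are distinguished, Theorem \ref{mt4bis} applies verbatim to the pair $(h_0,h_1)$ and yields that the weighted homogenizations $f_0$ and $f_1$ form a Zariski pair of curves in $\mathbb{P}^2(P)$ with $P = {}^t(p_1,1,1)$, as claimed. The main (and in fact only) obstacle is bookkeeping: one must verify that the hypotheses of Proposition (5.7) of \cite{O5}---irreducibility of the $h_j$, genericity of $L_\infty$, and the (H.I.C.)-condition on at least one $G_j$---are furnished by the standing assumptions of the corollary, and that the conclusion of that proposition is indeed strong enough to promote the projective non-isomorphism to an affine one. No substantial new ingredient beyond Theorem \ref{mt4bis} and Proposition (5.7) of \cite{O5} is needed.
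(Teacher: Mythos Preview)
Your proposal is correct and follows essentially the same approach as the paper: invoke \cite[Proposition~(5.7)]{O5} to upgrade the strong (projective) Zariski pair to a strong generic affine Zariski pair under the (H.I.C.)-condition, and then apply Theorem~\ref{mt4bis}. Your added explanation of the central-extension mechanism behind Proposition~(5.7) is a nice gloss, but the logical structure is identical to the paper's proof.
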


\begin{proof}
As previously noted, $f_0$ and $f_1$ lie in $\mathcal{W}'_{P,p_1d}(\Delta,\Xi)$, where $\Xi$ is the set of isomorphism classes of the singularities of $V_P(f_0)$ and $V_P(f_1)$ located in $\mathbb{P}^{2*}(P)$, and $V_P(f_0)$ and $V_P(f_1)$ have the same combinatoric. Now, as $h_0$ and $h_1$ form a \emph{strong} Zariski pair of \emph{irreducible} curves in $\mathbb{P}^2$, and since either $\pi_1(\mathbb{C}^2\setminus V_{P_0}(h_0))$ or $\pi_1(\mathbb{C}^2\setminus V_{P_0}(h_1))$ satisfies the (H.I.C.)-condition, it follows from \cite[Proposition~(5.7)]{O5} that $h_0$ and $h_1$ form a \emph{strong generic affine} Zariski pair, so that
\begin{equation*}
\pi_1(\mathbb{C}^2\setminus V_{P_0}(h_0))\not\simeq\pi_1(\mathbb{C}^2\setminus V_{P_0}(h_1)).
\end{equation*}
We conclude with Theorem \ref{mt4bis}.
\end{proof}

\begin{example}
Consider Zariski's original \emph{Zariski pair} of projective curves in $\mathbb{P}^2$, which is given by homogeneous polynomials $h_0$ and $h_1$ of degree $6$ having $6$ cusp singularities (see \cite{Z}). Here, $h_0$ is of ``torus type'' while $h_1$ is not, so that $\pi_1(\mathbb{C}^2\setminus V_{P_0}(h_0))\simeq\mathbb{B}_3$ (the braid group on 3 strings) while  $\pi_1(\mathbb{C}^2\setminus V_{P_0}(h_1))\simeq\mathbb{Z}$ (see \cite{Z,O6}). Consider the weight vector $P:={}^t(2,1,1)$, and as above, for $j=0,1$ put $H_j(x,y):=h_j(x,y,1)$ and $F_j(x,y):=H_j(x,y^2)$. Finally, let $f_0$ and $f_1$ denote the weighted homogenizations with respect to $P$ of $F_0$ and $F_1$, respectively. Then, by \cite[Theorem (5.8)]{O5} or Theorem \ref{mt4bis} above, $V_P(f_0)$ and $V_P(f_1)$ are weighted projective curves of $P$-degree $12$ with $12$ cusp singularities in $\mathbb{P}^{2*}(P)$, they have the same combinatoric, and $\pi_1(U_z\setminus V_P(f_0))\simeq\mathbb{B}_3$ while $\pi_1(U_z\setminus V_P(f_1))\simeq\mathbb{Z}$. In particular, $f_0$ and $f_1$ form a Zariski pair of curves in the weighted projective plane $\mathbb{P}^2(P)$.
\end{example}

\subsection{$\mu$-Zariski pairs of Newton weighted-L\^e-Yomdin singularities}

We now arrive at the central definition of this paper.

\begin{definition}\label{defwzpss}
We say that (the germs at $\mathbf{0}$ of) two weak-Newton weighted-L\^e-Yomdin polynomials $g_0$ and $g_1$ in $\mathcal{W}''_{P,d,m}(\Delta,\Xi)$ form a \emph{$\mu$-Zariski pair of surface singularities in $\mathbb{C}^3$} if the following three conditions are satisfied:
\begin{enumerate}
\item[(i)]
$g_0$ and $g_1$ have the same monodromy zeta-function and the same Teissier $\mu^*$-invariant at~$\mathbf{0}$;
\item[(ii)]
the surface-germs $V(g_0)$ and $V(g_1)$ (equivalently, the links $K_{g_0}:=\mathbb{S}_\varepsilon^5\cap V(g_0)$ and $K_{g_1}:=\mathbb{S}_\varepsilon^5\cap V(g_1)$) in $\mathbb{C}^3$ are homeomorphic;
\item[(iii)]
$g_0$ and $g_1$ cannot be joined by a $\mu$-constant continuous path in $\mathcal{W}''_{P,d,m}(\Delta)$.
\end{enumerate} 
\end{definition}

We recall that the Teissier $\mu^*$-invariant of $g_j$ ($j = 0, 1$) is the triple composed of the Milnor numbers at $\mathbf{0}$ of $g_j$ and of its generic plane section, together with the multiplicity of $g_j$ at~$\mathbf{0}$ (see \cite{Teissier2}).

We emphasize that the path involved in condition (iii) lies in $\mathcal{W}''_{P,d,m}(\Delta)\supseteq\mathcal{W}''_{P,d,m}(\Delta,\Xi)$. Also, we observe that if two elements $g_0,g_1\in\mathcal{W}''_{P,d,m}(\Delta,\Xi)$ cannot be joined by a continuous path in the $\mu$-constant stratum of $\mathcal{W}''_{P,d,m}(\Delta)$, then, of course, they cannot be joined by such a path in the $\mu^*$-constant stratum of $\mathcal{W}''_{P,d,m}(\Delta)$ either, and it is well known that this last condition is a necessary condition for the pairs of germs $(\mathbb{C}^3,V(g_0))$ and $(\mathbb{C}^3,V(g_1))$ to be non-homeomorphic (see \cite[Chap.~II, Th\'eor\`eme 3.9]{Teissier2}). In other words, being a $\mu$-Zariski pair of surface singularities is a necessary condition for being a Zariski pair of surface singularities, as defined in the introduction. Of course, a $\mu$-Zariski pair is also a $\mu^*$-Zariski pair. Here, $\mu^*$-Zariski pairs are defined similarly to $\mu$-Zariski pairs, with the only difference being the replacement of the term ``$\mu$-constant'' in item (iii) of Definition \ref{defwzpss} with the term ``$\mu^*$-constant''.

Studies on $\mu$-Zariski pairs of surface singularities have also been carried out in \cite{O2}, where a slightly different definition is adopted---specifically, condition (ii) is not required (see ibid., \S 3.2).  Pairs satisfying only conditions (i) and (ii) were examined in \cite{O3}, where they are referred to as \emph{Zariski pairs of links}.

\begin{remark}\label{rem-cpca}
Let us point out the following facts.
\begin{enumerate}
\item[(a)]
Two polynomials $g_0$ and $g_1$ cannot be joined by a $\mu$-constant continuous path in $\mathcal{W}''_{P,d,m}(\Delta)$, if and only if, they cannot be joined by a $\mu$-constant \emph{piecewise complex-analytic path}\footnote{\label{f14} By a \emph{piecewise complex-analytic path} in $\mathcal{W}''_{P,d,m}(\Delta)$,  we mean a continuous path $\gamma\colon s\in [0,1]\mapsto \gamma(s):=g_s\in\mathcal{W}''_{P,d,m}(\Delta)$ such that there  exists a partition $0=s_0<s_1<\cdots<s_{q_0}=1$ of the interval $[0,1]$, and for each $0\leq q\leq q_0-1$, there exists a complex-analytic function $\tilde \gamma_q$, defined on an open subset $U_q\subseteq\mathbb{C}$ containing $[s_q,s_{q+1}]$, satisfying $\tilde \gamma_q\vert_{[s_q,s_{q+1}]}=\gamma\vert_{[s_q,s_{q+1}]}$ (see \cite[\S 5.1]{EO2}).} in $\mathcal{W}''_{P,d,m}(\Delta)$.
Indeed, by \cite[section 3]{EO2}, the $\mu$-constant strata of isolated surface singularities are constructible sets, and taking the intersection with $\mathcal{W}''_{P,d,m}(\Delta)$, which is also constructible, does not change this structure. The stated property then follows from an argument similar to that used in the proof of \cite[Theorem 5.4 and Proposition 5.2]{EO2}.
\item[(b)]
By \cite[Theorem 3.2]{ABLMH} or \cite[Theorem 3.3]{EO3}, Newton weighted-L\^e-Yomdin polynomials in $\mathcal{W}''_{P,d,m}(\Delta,\Xi)_{\text{ND}}$ have the same Milnor number.  
Clearly, they also have the same multiplicity. Indeed, for any $g=f+h\in\mathcal{W}''_{P,d,m}(\Delta,\Xi)_{\text{ND}}$, we have $\Gamma(g)=\Gamma(f)=\Delta$, and hence the multiplicity of $g$ (which is defined as the minimum value of the sums $\nu_1+\nu_2+\nu_3$ for $(\nu_1,\nu_2,\nu_3)\in\Gamma(g)$) is completely determined by $\Delta$. 
\item[(c)]
Let $g_0=f_0+h_0$ and $g_1=f_1+h_1$  be two polynomials in $\mathcal{W}''_{P,d,m}(\Delta,\Xi)_{\text{ND}}$ such that the weighted projective curves $V_P(f_0)$ and $V_P(f_1)$ share the same combinatoric. Then, by a weighted homogeneous version of \cite[Theorem 25 and Remark 26]{O2}, 
the surface-germs $V(g_0)$ and $V(g_1)$ have the same dual resolution graph, and hence, the links $K_{g_0}$ and $K_{g_1}$ are diffeomorphic.
\end{enumerate}
\end{remark}

Note that unlike Zariski pairs of curves in $\mathbb{P}^2(P)$, being a $\mu$-Zariski pair of surface singularities in~$\mathbb{C}^3$ is a \emph{local} notion.

\subsection{Statements of the results}
The first result of this section precisely concerns $\mu$-Zariski pairs of surface singularities. It is stated as follows.

\begin{theorem}\label{mt3}
Assume that $f_0,f_1\in \mathcal{W}'_{P,d}(\Delta,\Xi)_{\text{\emph{ND}}}$ form a  Zariski pair of curves in $\mathbb{P}^2(P)$, and consider $g_0,g_1\in\mathcal{W}''_{P,d,m}(\Delta,\Xi)_{\text{\emph{ND}}}$ such that for $j=0,1$ the face-function of $g_j$ with respect to $P$ is $f_j$ (i.e., the decomposition~\eqref{dews} of $g_j$ as an element of $\mathcal{W}''_{P,d,m}(\Delta,\Xi)_{\text{\emph{ND}}}$ is of the form $g_j=f_j+h_j$). Then the following two statements hold true.
\begin{enumerate}
\item
If $p_1=p_2=p_3$ or $p_1\geq p_2>p_3$, then $g_0$ and $g_1$ form a $\mu$-Zariski pair of surface singularities in $\mathbb{C}^3$. 
\item
The same conclusion holds true for $p_1>p_2=p_3$ if we assume further that
\begin{equation*}
\Gamma(f_j^H)=\Gamma(f_j(x,y,0))
\end{equation*}
 for $j=0,1$, where again $H=\{(x,y,z)\in\mathbb{C}^3\, ;\, z=ax+by\}$ is a generic plane for $V(f_j)$ through the origin, and $f_j^H(x,y):=f_j\vert_H(x,y,ax+by)$.
\end{enumerate}
\end{theorem}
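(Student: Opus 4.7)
\medskip

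The plan is to verify the three conditions of Definition \ref{defwzpss} in turn, drawing on the various results developed earlier in the paper.

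For conditions (i) and (ii), I would first observe that both $g_0$ and $g_1$ share the Newton boundary $\Delta$, so their multiplicities coincide. By Remark \ref{rem-cpca}(b), $\mu(g_0,\mathbf{0}) = \mu(g_1,\mathbf{0})$, and Theorem \ref{remthm1}---the $\mathcal{W}''_{P,d,m}(\Delta)$-counterpart of Theorem \ref{mt1}, applicable in case~(1) without further hypothesis and in case~(2) precisely by virtue of the assumption $\Gamma(f_j^H) = \Gamma(f_j(x,y,0))$---ensures that the Milnor number of a generic plane section $\mu(g_j^H)$ is determined by $\Delta$, so the full Teissier $\mu^*$-triple is common to $g_0$ and $g_1$. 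For the monodromy zeta-function, I would invoke the standard weighted-L\^e-Yomdin formula (as in the references cited in the introduction): for Newton weighted-L\^e-Yomdin polynomials it is a combinatorial object depending only on $\Delta$ and on the shared list $\Xi$ of topological types of the singularities of the weighted projective curves $V_P(f_j)$. For condition~(ii), the Zariski pair hypothesis on $(f_0,f_1)$ implies by Definition \ref{def-zpwpc}(1) that $V_P(f_0)$ and $V_P(f_1)$ have the same combinatoric; Remark \ref{rem-cpca}(c) then yields that $V(g_0)$ and $V(g_1)$ admit the same dual resolution graph, whence diffeomorphic links $K_{g_0}$ and $K_{g_1}$.

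The heart of the argument is condition (iii), which I would establish by contradiction. Assume that $g_0$ and $g_1$ can be joined by a $\mu$-constant continuous path $s \mapsto g_s$ in $\mathcal{W}''_{P,d,m}(\Delta)$. By Remark \ref{rem-cpca}(a), this path may be taken to be piecewise complex-analytic. Writing $g_s = f_s + h_s$ for the canonical decomposition, Theorem \ref{mt2} guarantees that the configuration of singularities of $V_P(f_s)$ remains stable along the family, so in particular every $f_s$ lies in $\mathcal{W}'_{P,d}(\Delta,\Xi)$ and the assignment $s \mapsto f_s$ gives a continuous path joining $f_0$ and $f_1$ inside a single path-connected component of $\mathcal{W}'_{P,d}(\Delta,\Xi)$. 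Proposition \ref{prop-samecc} then produces an admissible homeomorphism of pairs
\[
(\mathbb{P}^2(P), V_P(f_0)) \longrightarrow (\mathbb{P}^2(P), V_P(f_1)),
\]
directly violating condition~(2) of Definition \ref{def-zpwpc} for the hypothesized Zariski pair. This contradiction completes the proof.

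The main obstacle, and the point where essentially all the preparatory theorems converge, is precisely the passage from a hypothetical $\mu$-constant path of polynomials in $\mathcal{W}''_{P,d,m}(\Delta)$ to a well-behaved path of weighted homogeneous initial forms lying throughout in the strictly smaller stratified set $\mathcal{W}'_{P,d}(\Delta,\Xi)$: a priori nothing in the definition of $\mathcal{W}''_{P,d,m}(\Delta)$ alone prevents the singularity type of $V_P(f_s)$ from jumping, merging, or splitting along the family. The combined strength of Theorem \ref{mt2} (stability under $\mu$-constant deformations) and Proposition \ref{prop-constructible} (constructibility and path-component description of $\mathcal{W}'_{P,d}(\Delta,\Xi)$, used inside the proof of Proposition \ref{prop-samecc}) is what rules this out. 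Case~(2) carries the additional mild technicality that the generic plane section hypothesis $\Gamma(f_j^H) = \Gamma(f_j(x,y,0))$ is needed only at the two endpoints $j=0,1$ in order to assemble condition~(i), exactly where it is assumed.
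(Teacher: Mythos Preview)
Your argument is correct and follows essentially the same route as the paper's proof: conditions (i) and (ii) are assembled from Remark \ref{rem-cpca}(b),(c) and Theorem \ref{remthm1}, and condition (iii) is obtained by contradiction via Remark \ref{rem-cpca}(a), Theorem \ref{mt2}, and Proposition \ref{prop-samecc}. The only place where the paper is noticeably more explicit is the equality of monodromy zeta-functions: rather than appealing to a ``standard formula'', it unpacks formula \eqref{okaformula}, uses Milnor--Orlik for the factor $\zeta(t)$, the common combinatoric for $\mu^{\text{tot}}(E(P,f_j))$, and---crucially relying on the ND hypothesis---shows via Varchenko's theorem that the local factors $\zeta_{\pi^*g_j,\varrho_j}(t)$ coincide because $\pi^*g_0$ and $\pi^*g_1$ are Newton non-degenerate with the same Newton boundary in admissible local coordinates.
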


For the meaning of $\mathcal{W}'_{P,d}(\Delta,\Xi)_{\text{ND}}$ and $\mathcal{W}''_{P,d,m}(\Delta,\Xi)_{\text{ND}}$, see Definition \ref{def-spaceND} above. In particular, in this theorem, we assume that the plane curve singularities defining the isomorphism classes $\xi_i$ in the list $\Xi$ are all given by Newton non-degenerate functions.

Theorem \ref{mt3} shows that many examples of $\mu$-Zariski pairs of surface singularities in $\mathbb{C}^3$ can be derived from Zariski pairs of curves in a weighted projective space, and Theorem \ref{mt4bis} and Corollary \ref{mt4} show that these latter pairs can, in turn, be constructed from numerous known examples of classical Zariski pairs of curves in the standard projective plane.

Theorem \ref{mt3} is proved in section \ref{proofmt3}.
An important step in its proof  is the following result, which is a variant of Theorem \ref{mt1}, where the set $\mathcal{W}'_{P,d}(\Delta)$ is replaced by $\mathcal{W}''_{P,d,m}(\Delta)$.

\begin{theorem}\label{remthm1}
Let $g\in\mathcal{W}''_{P,d,m}(\Delta)$, and let $H:=\{z=ax+by\}\subseteq \mathbb{C}^3$ be a generic plane for $V(g)$ through the origin. 
\begin{enumerate}
\item
If $p_1=p_2=p_3$ or $p_1\geq p_2>p_3$, then the function $g^H(x,y):=g\vert_H(x,y,ax+by)$
is Newton non-degenerate, $\nu(g^H)$ is independent of $H$, and $\mu(g^H)$ is completely determined by~$\Delta$. 
\item
In the case where $p_1>p_2=p_3$, to get the same conclusions, we need (as in Theorem~3.3) the additional assumption $\Gamma(g^H)=\Gamma(f(x,y,0))$,
where $f$ is the weighted homogeneous initial polynomial of $g$.
\end{enumerate}
\end{theorem}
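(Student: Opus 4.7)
\medskip

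The plan is to reduce Theorem~\ref{remthm1} to Theorem~\ref{mt1}, by proving that $g^H$ and $f^H$ share the same Newton boundary and, more strongly, the same face-functions on every compact face. Writing $g=f+h$, we have $g^H(x,y)=f^H(x,y)+h^H(x,y)$ with $h^H(x,y)=h(x,y,ax+by)$. Once I know that $g^H_\tau = f^H_\tau$ for every compact face $\tau$ of $\Gamma(g^H)$, Newton non-degeneracy transfers from $f^H$ (which is granted by Theorem~\ref{mt1}) to $g^H$, and the equalities $\nu(g^H)=\nu(f^H)$ and $\mu(g^H)=\mu(f^H)$ are immediate.

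The core step is a weight-lifting argument. Fix an inner normal $Q=(q_1,q_2)\in\mathbb{R}^2_{>0}$ of a compact face of $\Gamma(g^H)$, and set $\tilde Q:=(q_1,q_2,\min(q_1,q_2))$. For a $3$-dimensional monomial $x^\alpha y^\beta z^\gamma$, expanding $(ax+by)^\gamma$ produces terms $x^{\alpha+j}y^{\beta+\gamma-j}$, $0\leq j\leq\gamma$, whose minimum $Q$-weight is exactly $\tilde Q\cdot(\alpha,\beta,\gamma)$. Hence the minimum $Q$-weight among $g^H$-monomials equals the minimum $\tilde Q$-weight among monomials of $g$, and the face-function $g^H_Q$ is assembled from exactly those monomials of $g$ realizing this minimum. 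To separate the contributions of $f$ and $h$, observe that every monomial $v$ of $f$ lies on $\{P\cdot\mu=d\}$, while every monomial of $h$ satisfies $P\cdot v\geq d+m$. Using the inclusion $\Gamma_+(h)\subseteq\Gamma_+(f)$, any monomial $v$ of $h$ decomposes as $v=\sum\lambda_i v_i + r$ with $v_i\in\mbox{supp}(f)$, $\lambda_i\geq 0$, $\sum\lambda_i=1$ and $r\in\mathbb{R}^3_+$; the $P$-degree constraint forces $P\cdot r\geq m>0$, so $r\neq 0$, and since $\tilde Q$ has strictly positive components, $\tilde Q\cdot r>0$. Therefore $\tilde Q\cdot v > \min_{\mbox{supp}(f)}\tilde Q$: no monomial of $h$ can attain the minimum $\tilde Q$-weight, which gives $g^H_\tau = f^H_\tau$ on every compact face $\tau$, and in particular $\Gamma(g^H)=\Gamma(f^H)$.

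With this identification in hand, part~(1) follows directly from Theorem~\ref{mt1}(1) applied to $f^H$; the equality $\mu(g^H)=\nu(g^H)$ is Kouchnirenko's theorem when $g^H$ is convenient and its non-convenient extension by Bivia-Ausina--Oka otherwise, exactly as in the proof of Theorem~\ref{mt1}. For part~(2), combining the hypothesis $\Gamma(g^H)=\Gamma(f(x,y,0))$ with the identity $\Gamma(g^H)=\Gamma(f^H)$ just proved yields $\Gamma(f^H)=\Gamma(f(x,y,0))$, which is precisely the extra hypothesis needed to invoke Theorem~\ref{mt1}(2). The main technical obstacle is the strict-separation step above, which genuinely uses both the lower bound $P\cdot v\geq d+m$ on the monomials of $h$ \emph{and} the inclusion $\Gamma_+(h)\subseteq\Gamma_+(f)$: neither condition alone would suffice to push $h$-contributions strictly above the face in an arbitrary direction $\tilde Q$. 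Once this separation is secured, the remainder reduces cleanly to Theorem~\ref{mt1}.
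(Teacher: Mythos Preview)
Your reduction to Theorem~\ref{mt1} is sound, and it is a cleaner packaging than what the paper suggests. The paper does not write out a proof at all: it simply says the result ``is obtained by a straightforward modification of the argument used in the proof of Theorem~\ref{mt1}'' and leaves the details to the reader. The intended route is therefore to re-run the case analysis of Section~\ref{proofmt1} for $g=f+h$, checking in each case (the edge $AB$, the new edges $AD$, $AC$, $CC'$, and the $p_2=p_3$ case) that the monomials coming from $h$---all of which have ${}^t(p_1,p_2)$-degree strictly larger than $d$ after the substitution---do not disturb the Newton boundary or the face-functions. Your weight-lifting argument via $\tilde Q=(q_1,q_2,\min(q_1,q_2))$ handles all these cases uniformly: it shows once and for all that $g_{\tilde Q}=f_{\tilde Q}$ for every positive $\tilde Q$, hence $h^H$ contributes nothing to any compact face of $\Gamma(g^H)$, and Theorem~\ref{mt1} can be invoked as a black box. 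This buys you a proof that does not need to revisit the geometry of Figures~1--3.

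Two small points deserve tightening. First, the assertion ``the minimum $Q$-weight among $g^H$-monomials equals the minimum $\tilde Q$-weight among monomials of $g$'' tacitly assumes no cancellation at the lowest level when you substitute $z=ax+by$; this is guaranteed only for generic $(a,b)$, which you are assuming anyway, but it should be said. Second, the passage from ``$g^H_\tau=f^H_\tau$ for every compact face $\tau$ of $\Gamma(g^H)$'' to ``$\Gamma(g^H)=\Gamma(f^H)$'' is correct but not immediate: one should note that every vertex of $\Gamma(g^H)$ is then a monomial of $f^H$, so $\Gamma_+(g^H)\subseteq\Gamma_+(f^H)$, while the reverse inclusion follows (again for generic $(a,b)$) from $\mbox{supp}(f^H)\subseteq\mbox{supp}(g^H)$. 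Finally, a bibliographic slip: the reference \cite{BO} invoked for the non-convenient case of $\mu=\nu$ is Brzostowski--Oleksik, not Bivi\`a-Ausina--Oka.
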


Theorem \ref{remthm1} is obtained by a straightforward modification of the argument used in the proof of Theorem \ref{mt1} (see section 6). The details are left to the reader.

The third result of this section, presented below, also plays a crucial role in establishing Theorem~\ref{mt3}. This result also has a clear and significant interest of its own.

\begin{theorem}\label{mt2}
If $\{g_s\}_{0\leq s\leq 1}$ is a $\mu$-constant piecewise complex-analytic family of weak-Newton weighted-L\^e-Yomdin polynomials $g_s\in\mathcal{W}''_{P,d,m}(\Delta)$ such that for $s=0$ we have $g_0\in\mathcal{W}''_{P,d,m}(\Delta,\Xi)$, then we also have $g_s\in\mathcal{W}''_{P,d,m}(\Delta,\Xi)$ for any $0\leq s\leq 1$. 
\end{theorem}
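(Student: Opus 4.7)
The plan is to reduce the statement to the content of Proposition \ref{prop-constructible}: since that proposition identifies $\mathcal{W}''_{P,d,m}(\Delta, \Xi)$ as a union of path-connected components of the constructible set $\mathcal{W}''_{P,d,m}(\Delta, \bar\mu)$, where $\bar\mu = \{\mu_1, \ldots, \mu_k\}$ is the multi-set of Milnor numbers attached to $\Xi$, and since the family $\{g_s\}$ starts inside $\mathcal{W}''_{P,d,m}(\Delta, \Xi)$ and is continuous, it suffices to prove that $g_s \in \mathcal{W}''_{P,d,m}(\Delta, \bar\mu)$ for every $s \in [0,1]$. Connectedness of $[0,1]$ will then confine the image to a single component of $\mathcal{W}''_{P,d,m}(\Delta, \bar\mu)$, hence to $\mathcal{W}''_{P,d,m}(\Delta, \Xi)$.

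The core task is therefore to show that the multi-set of local Milnor numbers at the singularities of $V_P(f_s)$ lying in $\mathbb{P}^{2*}(P)$ is constantly equal to $\bar\mu$. The first ingredient is a L\^e--Yomdin-type formula for the Milnor number of any weak-Newton weighted-L\^e-Yomdin polynomial: by \cite[Theorem 3.2]{ABLMH} (see also \cite[Theorem 3.3]{EO3}), for $g = f + h \in \mathcal{W}''_{P,d,m}(\Delta)$ one has
\begin{equation*}
\mu(g) \;=\; A(\Delta, P) \;+\; B(P,m)\cdot N(f), \qquad N(f) := \sum_{\rho} \mu\bigl(V_P(f),\rho\bigr),
\end{equation*}
where the sum ranges over $\rho \in \mathrm{Sing}(V_P(f)) \cap \mathbb{P}^{2*}(P)$, and the coefficients $A(\Delta,P)$ and $B(P,m)$ depend only on the indicated data. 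Because $\Delta$, $P$, $m$ are fixed and $\mu(g_s)$ is constant by the $\mu$-constant hypothesis, the function $s \mapsto N(f_s)$ is constant.

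The second ingredient is the piecewise complex-analytic structure of the path (cf.\ footnote \ref{f14}). On each piece where $s \mapsto f_s$ extends to a complex-analytic map, the singular locus of $V_P(f_s)$---confined to $\mathbb{P}^{2*}(P)$ by Remark \ref{rem-singset}---forms an analytic family, and the standard upper-semicontinuity of Milnor numbers yields, for each singular point $\rho_0$ of $V_P(f_{s_0})$, the estimate
\begin{equation*}
\sum_{\rho \to \rho_0} \mu\bigl(V_P(f_s), \rho\bigr) \;\leq\; \mu\bigl(V_P(f_{s_0}), \rho_0\bigr),
\end{equation*}
with strict inequality whenever the cluster on the left contains more than one singular point, since the merging of distinct singularities of a plane curve strictly increases the local Milnor number. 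Summing over all singular points of $V_P(f_{s_0})$ gives $N(f_s) \leq N(f_{s_0})$ near $s_0$, and the constancy of $N$ then forces equality in every local inequality; this rules out collisions as well as the creation of new singularities out of smooth regions. Hence the singular points can be tracked continuously along each piece, each with a locally constant Milnor number, and path-concatenation across the finitely many pieces upgrades this to the global constancy of the multi-set $\bar\mu(s) \equiv \bar\mu$. This places $g_s$ in $\mathcal{W}''_{P,d,m}(\Delta, \bar\mu)$ for all $s$, and the initial reduction concludes the argument.

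The main obstacle is the strict-inequality step: one must verify rigorously, in the weighted projective setting, that no two distinct singular points of $V_P(f_s)$ can collide without strictly raising the total Milnor number, and likewise that no new singular point can materialise from a smooth region of the family. Both are standard consequences of the upper-semicontinuity of $\mu$ in analytic deformations of plane curve germs combined with Milnor-type additivity formulas, but interfacing these local statements with the global piecewise-analytic path, and ruling out drift of singular points toward the stratum $\{xyz = 0\}$ where $\mathbb{P}^2(P)$ carries its quotient singularities, requires careful bookkeeping.
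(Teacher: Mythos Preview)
Your argument is correct and runs parallel to the paper's, but with two substitutions. For the constancy of the total Milnor number $N(f_s)$ you invoke the L\^e--Yomdin-type formula $\mu(g)=A(\Delta,P)+B(P,m)\,N(f)$ from \cite{ABLMH,EO3} directly; the paper instead proves this as Lemma~\ref{ctmn} via the zeta-function decomposition~\eqref{okaformula}: $\mu$-constancy forces $\zeta_{g_s,\mathbf{0}}$ to be constant, and since the factors $\zeta_{\pi^*g_s,\varrho_s}$ involve only powers $(1-t^{m_j})$ with $m_j>d$, the exponent of $(1-t^d)$---namely $\mu^{\mathrm{tot}}(E(P,f_s))$---must be constant. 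Your route is shorter; the paper's avoids checking that the hypotheses of \cite[Theorem~3.2]{ABLMH} hold for arbitrary members of $\mathcal{W}''_{P,d,m}(\Delta)$ (in the text that formula is invoked only in more restricted settings, cf.\ Remark~\ref{rem-cpca}(b) and \S\ref{proof-prop-constructible}). For the passage from ``constant multi-set $\bar\mu$'' to ``constant configuration $\Xi$'' you use Proposition~\ref{prop-constructible} plus connectedness of $[0,1]$, while the paper applies L\^e's equisingularity theorem \cite{Le2} directly; these are equivalent, since the ``union of components'' clause in Proposition~\ref{prop-constructible} is itself an application of that theorem. Your strict-inequality step is exactly the Lazzeri--L\^e inequality~\eqref{sumle} that the paper cites, so the obstacle you flag is already handled by that reference; the drift-toward-$\{xyz=0\}$ worry is excluded a priori by Remark~\ref{rem-singset} and the inclusion~\eqref{cincl2}.
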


Informally, Theorem \ref{mt2} says that for any $\mu$-constant piecewise complex-analytic family of weak-Newton weighted-L\^e-Yomdin polynomials $g_s=f_s+h_s\in \mathcal{W}''_{P,d,m}(\Delta)$, the singularities of the weighted projective curves $V_P(f_s)$ remain stable as the deformation parameter $s$ varies. 

Note that, unlike Theorem \ref{mt3}, in Theorem \ref{mt2} we do not assume that the plane curve singularities $\xi_i\in\Xi$ are defined by Newton non-degenerate functions.

Theorems \ref{mt2} is proved in section \ref{proofmt2}.

\section{Proof of Proposition \ref{prop-constructible}}\label{proof-prop-constructible}

First, observe that the statements concerning 
$\mathcal{W}''_{P,d,m}(\Delta,\bar\mu)$
and
$\mathcal{W}''_{P,d,m}(\Delta,\Xi)$
follow immediately from those for 
$\mathcal{W}'_{P,d}(\Delta,\bar\mu)$
and
$\mathcal{W}'_{P,d}(\Delta,\Xi)$,
respectively. Also, note that the statement for $\mathcal{W}'_{P,d}(\Delta,\Xi)$ is an immediate consequence of the corresponding statement for  $\mathcal{W}'_{P,d}(\Delta,\bar\mu)$. So, we only have to show that $\mathcal{W}'_{P,d}(\Delta,\bar\mu)$ is constructible. As the projective plane is significantly more tractable than its weighted counterpart, we introduce an auxiliary set
$\check{\mathcal{W}}'_{P,d}(\Delta,\bar\mu)$
of \emph{homogeneous} polynomials as follows. 

To begin with, observe that the covering transformation $\mathbb{C}^3\to\mathbb{C}^3$ defined by
\begin{equation*}
(x,y,z)\mapsto (x^{p_1},y^{p_2},z^{p_3})
\end{equation*} 
induces a well-defined $p_1p_2p_3$-fold branched covering $\psi\colon \mathbb{P}^2\to\mathbb{P}^2(P)$ given by
\begin{equation}\label{mapphi}
[x\!:\!y\!:\!z]\mapsto [x^{p_1}\!:\!y^{p_2}\!:\!z^{p_3}]_P. 
\end{equation} 
Indeed, if $[x\!:\!y\!:\!z]=[x'\!:\!y'\!:\!z']$ in $\mathbb{P}^2$, then there exists $t\not=0$ such that $(x',y',z')=(tx,ty,tz)$, and therefore $[(x')^{p_1}\!:\!(y')^{p_2}\!:\!(z')^{p_3}]_P=[t^{p_1}x^{p_1}\!:\!t^{p_2}y^{p_2}\!:\!t^{p_3}z^{p_3}]_P=[x^{p_1}\!:\!y^{p_2}\!:\!z^{p_3}]_P$ in $\mathbb{P}^2(P)$. Here, $[...]$ and $[...]_P$ denote the equivalence classes in $\mathbb{P}^2$ and $\mathbb{P}^2(P)$, respectively.

Now, we say that a polynomial $F$ is in $\check{\mathcal{W}}'_{P,d}(\Delta,\bar\mu)$ if the following two conditions hold true:
\begin{enumerate}
\item
$F$ is the pull-back by $\psi$ of a polynomial $f\in\mathcal{W}'_{P,d}(\Delta,\bar\mu)$, that is, $F(x,y,z):=\psi^* f(x,y,z):=f(x^{p_1},y^{p_2},z^{p_3})$;
\item
for each $1\leq i\leq k$, the Milnor number $\mu(F,\check\rho_i)$ of the germ $(V_{P_0}(F),\check\rho_i)$ is $\mu_i$, where $P_0:={}^t(1,1,1)$ and $\check\rho_i$ is any of the $p_1 p_2 p_3$ singular points of $V_{P_0}(F)\subseteq\mathbb{P}^2$ that are sent to $\rho_i\in\mbox{Sing}(V_P(f))\cap\mathbb{P}^{2*}(P)$ by $\psi$.
\end{enumerate}

\begin{remark}
Note that if $F\in\check{\mathcal{W}}'_{P,d}(\Delta,\bar\mu)$, then it is Newton non-degenerate on every proper face of $\check\Delta:=\Gamma(F)$. This follows from the Newton non-degeneracy of $f$ on the proper faces of $\Delta$ (see \cite[Chap.~III, Proposition (1.3.3)]{O1}).
\end{remark}

Since $\psi$ is a $p_1p_2p_3$-fold branched covering, to prove that $\mathcal{W}'_{P,d}(\Delta,\bar\mu)$ is constructible, it suffices to show that $\check{\mathcal{W}}'_{P,d}(\Delta,\bar\mu)$ is constructible. 
Clearly, (1) is a constructible condition. To show that adding condition (2) keeps  constructibility, we proceed as follows. 

First, we recall the definition of a certain constructible set, denoted by $W(2,\delta,\mu)$, which we introduced in \cite[section 3]{EO2}. Let $\delta$ and $\mu$ be two integers with $\delta>0$. Consider the space $P(2,\delta)$ of polynomials in two variables and of degree less than or equal to $\delta$. This is a vector space of dimension $N:=\binom{2+\delta}{2}$, putting an order on the basis monomials $1=M_1(x,y)<M_2(x,y)<\cdots<M_N(x,y)$. Hereafter, we identify $P(2,\delta)$ with $\mathbb{C}^N$. Let 
\begin{equation*}
\pi_\delta\colon \mathcal{O}_2\equiv\mathbb{C}\{x,y\}\to P(2,\delta)
\end{equation*}
be the natural projection obtained by deleting the terms of degree greater than $\delta$. For any $f\in\mathcal{O}_2$, we write $J(f)$ for the Jacobian ideal of $f$, and we put 
\begin{equation*}
J_\delta(f):=\pi_\delta(J(f)).
\end{equation*} 
 An  element of $J_\delta(f)$ is written as $\pi_\delta(f_1 (\partial f/\partial x) +f_2 (\partial f/\partial y))$, where $f_1,f_2\in\mathcal{O}_2$, and we easily check that $J_\delta(f)$ is the subspace of $P(2,\delta)\equiv\mathbb{C}^N$ generated by the following set of $2N$ vectors:
\begin{equation*}
B_\delta(f):=\bigg\{\pi_\delta\bigg( M_1\,\frac{\partial f}{\partial x}\bigg),\ldots, \pi_\delta\bigg( M_N\,\frac{\partial f}{\partial x}\bigg), \pi_\delta\bigg( M_1\,\frac{\partial f}{\partial y}\bigg),\ldots,\pi_\delta\bigg( M_N\,\frac{\partial f}{\partial y}\bigg)\bigg\}.
\end{equation*} 
Hereafter, we identify $B_\delta(f)$ with an $N\times 2N$ matrix. 
Then  we consider the algebraic variety
\begin{align*}
V(2,\delta,\mu):=\{f \in P(2,\delta) \mid \mbox{all $(N-\mu)\times(N-\mu)$ 
minors of $B_\delta(f)$ vanish}\},
\end{align*}
and we define the constructible set $W(2,\delta,\mu)$ by
\begin{equation*}
W(2,\delta,\mu):=V(2,\delta,\mu-1)\setminus V(2,\delta,\mu).
\end{equation*}  
Note that for any $f\in P(2,\delta)$, the following equivalences hold true:
\begin{equation*}
f\in W(2,\delta,\mu)\Leftrightarrow \mbox{rk}\, B_\delta(f)=N-\mu \Leftrightarrow \dim P(2,\delta)/J_\delta(f)=\mu.
\end{equation*} 

Now, let $\eta$ denote the number of monomials in a generic polynomial $f$ belonging to $\mathcal{W}'_{P,d}(\Delta,\bar\mu)$, and let $c_1,\ldots,c_\eta$ be its coefficients. Let $F:=\psi^*f$, and let $(a_i,b_i)$ be the coordinates of $\check\rho_i$ in the affine chart $(u,v):= (x/z,y/z)$. Around $\check\rho_i$, we introduce the shifted coordinates $(u_i,v_i) := (u-a_i,v-b_i)$, and we consider $F$ as a polynomial in the variables $u_i,v_i$ and whose coefficients are in $c_1,\ldots,c_\eta,a_i,b_i$. 
To show that condition (2) is constructible, we first note that for $1 \leq i \leq k$, the condition $F(a_i, b_i) = 0$ is constructible. Moreover, the (local) condition $\mu(F, \check \rho_i) = \mu_i$, which is equivalent to 
\begin{equation}\label{i}
(c_1,\ldots,c_\eta,a_i,b_i)\in W(2,\delta,\mu_i)
\end{equation}
(for some $\delta > 0$ large enough) is also constructible. We now aim to formulate, as a condition on the parameters $c_1,\ldots,c_\eta,a_1,b_1,\ldots,a_k,b_k$, the (global) requirement that no distinct singular points $\check\rho_i\not=\check\rho_j$, $1\leq i\not=j\leq k$, merge into a single one. To this end, we consider an auxiliary function
\begin{equation*}
g(x,y,z):=f(x,y,z)+h(x,y,z),
\end{equation*}
where $h$ is a weighted homogeneous polynomial of $P$-degree $d+m$, with $m>0$ and $\mbox{Sing}(V_P(f))\cap V_P(h)=\emptyset$. For such a function $g$, the Milnor number can be effectively calculated. Indeed, since $f\in\mathcal{W}'_{P,d}(\Delta,\bar\mu)$, it follows from \cite[Theorem 3.2]{ABLMH} that $g$ has an isolated singularity at $\mathbf{0}$ with Milnor number
\begin{equation*}
\mu_+:=\mu(g)=(d/p_1-1)(d/p_2-1)(d/p_3-1)+m\mu^{\text{tot}},
\end{equation*}
where $\mu^{\text{tot}}$ is the sum of the local Milnor numbers at points of $\mbox{Sing}(V_P(f))$. As these Milnor numbers are independent of the choice of $f\in\mathcal{W}'_{P,d}(\Delta,\bar\mu)$, so is $\mu^{\text{tot}}$. Thus, ensuring that $\check\rho_i\not=\check\rho_j$ for $i\not=j$ is equivalent to assuming that
\begin{equation}\label{ii}
(c_1,\ldots,c_\eta,a_1,b_1,\ldots,a_k,b_k)\in W(3,\delta,\mu_+).
\end{equation}
Here, the set $W(3,\delta,\mu_+)$ is defined as $W(2,\delta,\mu)$, replacing $2$ by $3$ and $\mu$ by $\mu_+$.
Altogether now, consider the subset of $\mathbb{C}^\eta\times \mathbb{C}^{2k}$ consisting of points 
\begin{equation*}
(c_1,\ldots,c_\eta,a_1,b_1,\ldots,a_k,b_k)
\end{equation*}
 that satisfy the constructible conditions (1), \eqref{i} and \eqref{ii}. Then its projection on $\mathbb{C}^\eta$, which is nothing else than $\check{\mathcal{W}}'_{P,d}(\Delta,\bar\mu)$,  is a constructible set by Chevalley's theorem (see \cite[Th\'eor\`eme 3]{CC} and \cite[Th\'eor\`eme (1.8.4)]{Grothendieck}).

\section{Proof of Theorem \ref{mt1}}\label{proofmt1}

Throughout, we identify the integral points $(\alpha,\beta)$ of the Newton diagram $\Gamma(f^H)$ with the corresponding monomials $x^{\alpha}y^{\beta}$ in the polynomial $f^H$. 

If $p_1=p_2=p_3$, then necessarily $P={}^t(1,1,1)$. In this case, 
\begin{equation*}
f^H=c_0\prod_{i=1}^d(x+c_iy), 
\end{equation*}
where $c_0\not=0$ and $c_1,\ldots,c_d$ are mutually distinct. Indeed, since $f$ is reduced and $H$ is generic, $f^H$ has no multiple factor either. It follows that $f^H$ is Newton non-degenerate. Moreover, for $c_0$ and $c_i$ ($1\leq i\leq d$) generic, $f^H$ contains the terms $x^d$ and $y^d$ (up to coefficients), and hence, by Kouchnirenko's theorem \cite[\S 1.10, Th\'eor\`eme I]{K} or Milnor-Orlik's theorem \cite[Theorem~1]{MO}, we have
\begin{equation*}
\mu(f^H)=\nu(f)=d^2-2d+1.
\end{equation*}

From now on, suppose that the $p_i$'s are not all equal (i.e., $P\not={}^t(1,1,1)$). Expand $f$ with respect to the variable $z$ (which corresponds to the smallest weight $p_3$):
\begin{equation*}
f(x,y,z)=f(x,y,0)+\sum_{j=1}^q z^j\, h_j(x,y),
\end{equation*}
where $h_j(x,y)$ is a polynomial in $x,y$ of degree $d-p_3j$ with respect to the weight vector  ${}^t(p_1,p_2)$.
Clearly, since $p_1\geq p_2\geq p_3$, substituting $z=ax+by$ in $z^j\, h_j(x,y)$ produces only monomials of ${}^t(p_1,p_2)$-degree greater than or equal to $d+j(p_2-p_3)$. Put
\begin{equation*}
L:=\{(\nu_1,\nu_2)\in\mathbb{Z}^2_+\, ;\, \nu_1 p_1+\nu_2 p_2=d\}.
\end{equation*}
The Newton boundary $\Gamma(f(x,y,0))$ of $f(x,y,0)$ with respect to the coordinates $(x,y)$ defines a segment on the line $L$. Let us denote by $A=(a_1,a_2)$ and $B=(b_1,b_2)$ its ends. Assume, for instance, that $b_1\leq a_1$ and $a_2\leq b_2$ (see Figure \ref{figure1}). Note that $A=B$ is possible.

Now, we divide the proof into two cases according to $p_2>p_3$ or $p_2=p_3$.

\begin{figure}[t]
\includegraphics[width=16.5cm,height=4.5cm]{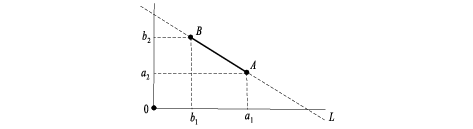}
\caption{The Newton boundary $\Gamma(f(x,y,0))$}
\label{figure1}
\end{figure}

\begin{figure}[b]
\includegraphics[width=16.5cm,height=7.5cm]{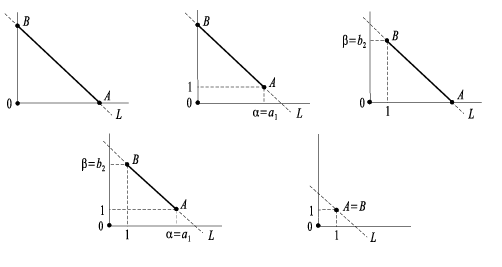}
\caption{The edge $AB$}
\label{figure2}
\end{figure}

\subsection{The case $p_2>p_3$}

In this case, since $d+j(p_2-p_3)>d$, the intersection $\Gamma(f^H)\cap L$ contains only the terms that come from $\Gamma(f(x,y,0))$. The terms produced by substituting $z=ax+by$ in $z^j\, h_j(x,y)$ are located strictly above $L$. Note that $\Gamma(f(x,y,0))$ (i.e., the face $AB$) is a proper face of $\Delta=\Gamma(f)$ and that on this face $f$ is Newton non-degenerate.

If $AB$ contains a term of the form $x^\alpha$ (or no term of this form but a monomial of   type $x^\alpha y$) and a term of the form $y^\beta$ (or, again, not of this form but of type $xy^\beta$), then $\alpha=a_1$ and $\beta=b_2$, and the edge $AB$ looks like one of the five pictures sketched in Figure~\ref{figure2}. In this case, the function $f^H$ is Newton non-degenerate and the Newton number $\nu(f^H)$ is entirely determined by $\Delta$. Indeed, the face-function $f_{AB}$ associated with the edge $AB$ is Newton non-degenerate by assumption, and if there is a point $C\in \Gamma(f^H)$ strictly above the line $L\supseteq AB$, then $C$ must be located on a coordinate axis, and hence the face-function corresponding to the edge $AC$ or $BC$ (according to the case) is Newton non-degenerate too and does not influence the Newton number of the face-function $f_{AB}$, that is, $\nu(f^H)=\nu(f_{AB})$. Thus, $f^H$ is Newton non-degenerate, and by \cite[\S 1.10, Th\'eor\`eme I]{K} (convenient case) or \cite[Corollary 3.10]{BO} (non-convenient case), we have $\mu(f^H)=\nu(f^H)$. 

Now, suppose that we are not in the above situation, and assume, for instance, that $A=x^{a_1}y^{a_2}$ is not of the form $x^{a_1}$ or $x^{a_1} y$. (We are only discussing what happens with the point $A$, as a completely symmetric discussion applies to the point $B$.) 

\begin{claim}\label{claimterm} 
Under this assumption, $f$ has a term of the form $x^\alpha z$. 
\end{claim}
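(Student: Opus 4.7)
My plan is to argue by contradiction, leveraging Remark \ref{rem-singset}, which guarantees that $\mbox{Sing}(V(f))\subseteq\mathbb{C}^{*3}\cup\{\mathbf{0}\}$ for every $f\in\mathcal{W}'_{P,d}(\Delta)$. Suppose that $f$ contains no monomial of the form $x^\alpha z$. I will show that the entire $x$-axis $\{y=z=0\}$ then lies in $\mbox{Sing}(V(f))$, contradicting the remark. The structure of the argument mirrors that of Remark \ref{rem-singset}, with the $x$-axis now playing the role of the $z$-axis.

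The first step is to exploit the position of $A$ on the edge $\Gamma(f(x,y,0))$. Since $A=(a_1,a_2)$ is the endpoint with largest $x$-exponent (equivalently, smallest $y$-exponent), every monomial $x^{\nu_1}y^{\nu_2}$ of $f(x,y,0)$ satisfies $\nu_2\geq a_2$. As $a_2\geq 2$ by the standing assumption, this forces $f$ to contain neither a monomial of the form $x^c$ (which would require $\nu_2=\nu_3=0$) nor one of the form $x^\gamma y$ (which would require $\nu_2=1$, $\nu_3=0$).

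The second step is to inspect $f$ and its partial derivatives along the $x$-axis. Among the monomials $x^{\nu_1}y^{\nu_2}z^{\nu_3}$ of $f$, only those with $\nu_2=\nu_3=0$ contribute to $f(x,0,0)$ and to $(\partial f/\partial x)(x,0,0)$; only those with $\nu_2=1$, $\nu_3=0$ contribute to $(\partial f/\partial y)(x,0,0)$; and only those with $\nu_2=0$, $\nu_3=1$---that is, monomials of the form $x^\alpha z$---contribute to $(\partial f/\partial z)(x,0,0)$. By the first step together with the contrary hypothesis, each of these four restrictions vanishes identically in $x$. Hence $f$ and its entire gradient vanish along the $x$-axis, so $\{y=z=0\}\subseteq\mbox{Sing}(V(f))$, which is the required contradiction.

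I do not anticipate any genuine obstacle. The assumption $a_2\geq 2$ is used precisely to simultaneously eliminate the $\nu_2=0$ and $\nu_2=1$ strata of monomials along the edge $\Gamma(f(x,y,0))$, leaving a monomial of the form $x^\alpha z$ as the only remaining device that can prevent the $x$-axis from being a singular curve of $V(f)$.
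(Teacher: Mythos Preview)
Your argument is correct and rests on the same geometric idea as the paper: absent a monomial $x^\alpha z$ (and with $a_2\geq 2$ ruling out $x^{a_1}$ and $x^{a_1}y$), the whole $x$-axis lies in $\mbox{Sing}(V(f))$. The only difference is in packaging: you invoke Remark~\ref{rem-singset} directly for $f$, whereas the paper passes to a nearby $f_n\in\mathcal{W}_{P,d}(\Delta)$ with isolated singularity, argues that $f_n$ must contain $x^\alpha z$, observes that $(\alpha,0,1)$ is then a vertex of the common Newton boundary $\Delta$, and concludes that $f$ carries this monomial too. Your route is slightly more streamlined, while the paper's detour yields the extra (though unused) information that $(\alpha,0,1)$ is a vertex of~$\Delta$.
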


\begin{proof}
We argue by contradiction. Suppose that $f$ does not include the terms $x^{a_1}$ and $x^{a_1} y$ and has no term of the form $x^\alpha z$.
As $\mathcal{W}_{P,d}(\Delta)\not=\emptyset$ and $f\in\mathcal{W}'_{P,d}(\Delta)\subseteq\overline{\mathcal{W}_{P,d}(\Delta)}$ (see section~\ref{sect-mngps-31}), there exists a sequence of polynomials $f_n\in\mathcal{W}_{P,d}(\Delta)$ approaching $f$ such that $f_n$ does not include the terms $x^{a_1}$ and $x^{a_1} y$. (Note that $f$ and $f_n$ have the same Newton boundary~$\Delta$.) If $f_n$ also contains no term of the form $x^\alpha z$, then  it does not have an isolated singularity at $\mathbf{0}$ (its singular locus contains the set defined by $y=z=0$), which is a contradiction with the fact that $f_n\in\mathcal{W}_{P,d}(\Delta)$.
Thus $f_n$ does include the term $x^\alpha z$, and necessarily the point $(\alpha,0,1)\equiv x^\alpha z$ lies on $\Delta$. In fact, $(\alpha,0,1)$ is a vertex of $\Delta$. (Indeed, for $(\alpha,0,1)$ to be an interior point of an edge, we would need to have two other points of $\Delta$ of the form $(\alpha',0,0)$ and $(\alpha'',0,\beta)$, with $\beta>1$, which, however, do not exist under our assumption.) Again, since $f$ and $f_n$ have the same Newton boundary $\Delta$, this implies that $f$ does indeed include the term $x^\alpha z$ with a non-zero coefficient.
\end{proof}

The monomial $x^\alpha z$ involved in Claim \ref{claimterm} comes from $z\, h_1(x,y)$, and substituting $z=ax+by$ into  it yields the terms $x^\alpha y$ and $x^{\alpha+1}$ in $f^H$. Put $C:=x^\alpha y\equiv (\alpha,1)$ and $C':=x^{\alpha+1}\equiv (\alpha+1,0)$. 
The presence of $C'$ in $f^H$ implies that $\Gamma(f^H)$ contains, up to a multiplicative constant, a term of the form $D=x^\beta$ with $\beta\leq\alpha+1$. 
Such a point $D$ comes from $f(x,0,z)$, which defines an edge $EF$ of $\Delta$ in the $xz$-coordinate plane. Put $E=x^{e_1}z^{d_1}$ and $F=x^{e_2}z^{d_2}$, and assume, for instance, $d_1<d_2$. Substituting $z=ax+by$ in $E$ and $F$ produces the terms $x^{e_1+d_1}$ and $x^{e_2+d_2}$ in $f^H$. Now, since $f$ is weighted homogeneous, 
$p_1e_1+p_3d_1=p_1e_2+p_3d_2$, 
and since $d_1<d_2$ and $p_1>p_3$, it follows that $e_1+d_1<e_2+d_2$, and $D=x^\beta$ comes from the point $E$ (i.e., $D=x^{e_1+d_1}$)---a point which is on the boundary $\partial \Delta$ of $\Delta$, so that $D$ is determined by $\partial \Delta$ and independent of $H$. In particular, the Newton number $\nu(f^H)$ of $f^H$ is independent of $H$, completely determined by $\Delta$.

\begin{figure}[t]
\includegraphics[width=16.5cm,height=7.5cm]{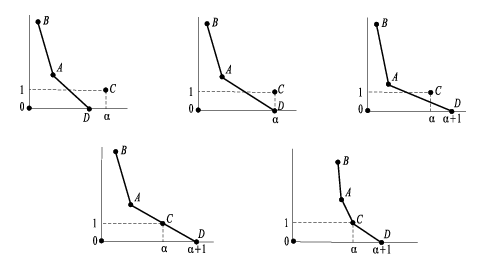}
\caption{The Newton boundary $\Gamma(f^H)$}
\label{figure3}
\end{figure}

The point $D$ may be below, on or above the line $AC$.  The Newton boundary of $f^H$ then looks like one of the five diagrams sketched in Figure~\ref{figure3}. (We only draw pictures in the case $A\not=B$, as the case $A=B$ can be handled in a similar manner.)
If $D$ is below the line $AC$, then $AD$ is an edge of $\Gamma(f^H)$ with no interior integral point, that is, it is of the form $c_0\, x^{a_1}y^{a_2}+c_1(a,b)\, x^{\beta}$, where the coefficient $c_1(a,b)$ depends linearly on $a$ and $b$, and therefore, it is Newton non-degenerate.
 If $D$ is on the line $AC$, then $D=C'$ and the face-function defined by the edge $ACC'$---which has no other interior integral point than the point $C$ itself\footnote{Indeed, the $P$-degree of $A\equiv x^{a_1}y^{a_2}$ and $x^\alpha z$ (which are monomials of $f$) is $d$, while the degree of $C\equiv x^\alpha y$ (which is a monomial of $f^H$) with respect to ${}^t(p_1,p_2)$ is $d+(p_2-p_3)$. The assertion follows from the fact that substituting $z=ax+by$ in $z^j\, h_j(x,y)$ produces only monomials of ${}^t(p_1,p_2)$-degree greater than or equal to $d+j(p_2-p_3)$.}---takes the form 
\begin{equation*}
c_0\, x^{a_1}y^{a_2}+c_1(a,b)\, x^\alpha y+c_2(a,b)\, x^{\alpha+1},
\end{equation*} 
and is Newton non-degenerate for generic values of $a$ and $b$. (Again, $c_1(a,b)$ and $c_2(a,b)$ are coefficients depending linearly on $a$ and $b$.)
Finally, if $D$ is strictly above the line $AC$, then, again, $D=C'$ and the face-functions associated with the edges $AC$ and $CC'$---which have no other interior integral point either---take the form 
\begin{equation*}
c_0\, x^{a_1}y^{a_2}+c_1(a,b)\, x^\alpha y
\quad\mbox{and}\quad
c_1(a,b)\, x^\alpha y+c_2(a,b)\, x^{\alpha+1}
\end{equation*} 
and are Newton non-degenerate. (Note that since $D$ is strictly located above the line $AC$, we have $a_1\not=\alpha a_2$.) 
In all cases, we see that $f^H$ is convenient, and we conclude that $\mu(f^H)=\nu(f^H)$ by Kouchnirenko's theorem (see \cite[\S 1.10, Th\'eor\`eme~I]{K}). 

\subsection{The case $p_2=p_3$}
As the $p_i$'s are not all equal, if $p_2=p_3$ then $p_1>p_2=p_3$.
In this case, substituting $z=ax+by$ in $z^j\, h_j(x,y)$ may add terms on the line $L$. These terms (which have $P$-degree $d$) come from $\sum_{j=1}^q b^j y^j h_j(x,y)$. (All the other terms have $P$-degree $>d$, and hence, are strictly above $L$.) In general, these terms may inflate the Newton boundary $\Gamma(f^H)$. However, under the assumption $\Gamma(f^H)=\Gamma(f(x,y,0))$, which says that all the added terms are in $\Gamma(f(x,y,0))$, this cannot occur. Then, since $f$ is Newton non-degenerate on the face $\Gamma(f(x,y,0))$, the restricted polynomial $f^H$ is Newton non-degenerate as well for generic values of $a$ and $b$, and we conclude again with \cite[\S 1.10, Th\'eor\`eme I]{K} or \cite[Corollary 3.10]{BO}.

\section{Proof of Theorem \ref{mt2}}\label{proofmt2}
 
Let $\Sigma^*$ be a regular simplicial cone subdivision of the dual Newton diagram $\Gamma^*(g_s)$ of $g_s$ such that $P$ is a vertex of $\Sigma^*$, and let $\pi\colon X\to \mathbb{C}^3$ be the toric modification associated with this subdivision. Take a cone $\sigma:=\mbox{Cone}(P,Q,R)$ of $\Sigma^*$ which includes $P$ as a vertex, and consider the corresponding toric chart $\mathbb{C}^3_{\sigma}$ of $X$. Finally, let $f_s\in\mathcal{W}'_{P,d}(\Delta)$ denote  the weighted homogeneous initial polynomial of $g_s$, and as in section \ref{sect-xilist}, let us first focus on the hypersurface $E(P,f_s)$. Then we have the following lemma.

\begin{lemma}\label{ctmn}
The total Milnor number $\mu^{\mbox{\tiny \emph{tot}}}(E(P,f_s))$ of $E(P,f_s)$ (i.e., the sum of the local Milnor numbers at the singular points $\varrho_s$ of $(E(P,f_s))$) is independent of $s$.
\end{lemma}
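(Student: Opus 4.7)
The plan is to apply the Milnor number formula for weighted-L\^e-Yomdin polynomials to each $g_s$ individually, and then to exploit the $\mu$-constancy hypothesis directly.

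For every $s \in [0,1]$, since $g_s = f_s + h_s \in \mathcal{W}''_{P,d,m}(\Delta)$, I would invoke \cite[Theorem 3.2]{ABLMH} (equivalently \cite[Theorem 3.3]{EO3}) to obtain a decomposition of the form
\[
\mu(g_s) = \mu_0(P,d) + m \cdot \mu^{\mbox{\scriptsize tot}}(E(P, f_s)),
\]
where $\mu_0(P,d)$ is a fixed integer depending only on $P$ and $d$ (it equals $(d/p_1-1)(d/p_2-1)(d/p_3-1)$ when each $p_i$ divides $d$). By~\eqref{cincl2}, all singular points of $E(P, f_s)$ lie in $\hat E(P)^*$, and the analytic isomorphism~\eqref{analyiso} identifies them bijectively with $\mbox{Sing}(V_P(f_s)) \cap \mathbb{P}^{2*}(P)$; by Remark~\ref{rem-singset}, this latter set is in fact all of $\mbox{Sing}(V_P(f_s))$. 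Thus $\mu^{\mbox{\scriptsize tot}}(E(P, f_s))$ genuinely captures the entire contribution to $\mu(g_s)$ coming from the curve singularities.

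By the $\mu$-constancy of the family, $s \mapsto \mu(g_s)$ is constant on $[0,1]$. Since $\mu_0(P,d)$ and $m$ are independent of $s$, the displayed identity immediately forces $s \mapsto \mu^{\mbox{\scriptsize tot}}(E(P, f_s))$ to be constant, which is precisely the assertion.

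The main obstacle I anticipate is verifying the precise applicability of the Milnor number formula in the weak-Newton setting, since \cite[Theorem 3.2]{ABLMH} is formally stated for strict weighted-L\^e-Yomdin singularities (where the intersection is empty on all of $\mathbb{P}^2(P)$ rather than only on $\mathbb{P}^{2*}(P)$). However, Remark~\ref{rem-singset} guarantees that the entire curve-singular locus $\mbox{Sing}(V_P(f_s))$ lies in $\mathbb{P}^{2*}(P)$, so the strict L\^e-Yomdin condition required by the formula is effectively satisfied by $g_s$, and the decomposition carries over without modification.
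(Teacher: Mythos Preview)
Your approach is correct and takes a genuinely different route from the paper's.

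The paper does \emph{not} appeal to the Milnor number formula. Instead it invokes the zeta-function decomposition
\[
\zeta_{g_s,\mathbf{0}}(t)=\zeta(t)\cdot (1-t^{d})^{\mu^{\text{tot}}(E(P,f_s))}\cdot\prod_{\varrho_s} \zeta_{\pi^*\! g_s,\,\varrho_{s}}(t)
\]
(from \cite{O2,O3}), and argues in three steps: (i) the factor $\zeta(t)$ depends only on $P$ and $d$; (ii) each local factor $\zeta_{\pi^* g_s,\varrho_s}(t)$ is a product of terms $(1-t^{m_j})^{\nu_j}$ with $m_j>d$, so it never interferes with the $(1-t^d)$ factor; (iii) $\mu$-constancy of $\{g_s\}$ forces $\zeta_{g_s,\mathbf{0}}(t)$ itself to be constant (this is the non-trivial input \cite[Lemma~12]{O2}). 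Reading off the exponent of $(1-t^d)$ then gives the result.

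Your argument is more direct: it bypasses step~(iii) entirely and works at the level of numbers rather than polynomials. The trade-off is that you must know the Milnor number identity $\mu(g_s)=\mu_0+m\cdot\mu^{\text{tot}}$ holds for every $g_s\in\mathcal{W}''_{P,d,m}(\Delta)$, not only in the Newton non-degenerate subclass $\mathcal{W}''_{P,d,m}(\Delta,\Xi)_{\text{ND}}$ where the paper cites it (Remark~\ref{rem-cpca}(b)). One small imprecision in your justification: Remark~\ref{rem-singset} does not literally say $\mbox{Sing}(V_P(f_s))\subseteq\mathbb{P}^{2*}(P)$; it says the \emph{affine critical locus} of $f_s$ lies in $\mathbb{C}^{*3}\cup\{\mathbf{0}\}$, so $V_P(f_s)$ may still carry quotient singularities on $\{xyz=0\}$ inherited from $\mathbb{P}^2(P)$. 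These, however, are governed by $P$, $d$ and the boundary of $\Delta$ (together with non-degeneracy on proper faces), hence contribute a term independent of $s$, and your conclusion survives. The paper's zeta-function route sidesteps this subtlety, and the formula it establishes is reused later in the proof of Theorem~\ref{mt3}, which is presumably why the authors chose it.
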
 

\begin{proof}
As we showed in \cite[\S 4.1]{EO3}, under our assumptions the function $g_s$ is weakly almost Newton non-degenerate in the sense of \cite[\S 2.7]{O2}. Thus, by \cite[Theorem~9]{O2} or \cite[Lemma 3.2, Remark 3.6 and Theorem 3.7]{O3}, for each $s$ we have
\begin{equation}\label{okaformula}
\zeta_{g_s,\mathbf{0}}(t)=\zeta(t)\cdot (1-t^{d})^{\mu^{\text{\tiny tot}}(E(P,f_s))}\cdot\prod \zeta_{\pi^*\! g_s,\,\varrho_{s}}(t),
\end{equation}
where $\zeta_{g_s,\mathbf{0}}(t)$ is the monodromy zeta-function of $g_s$ at $\mathbf{0}$.
In this formula, $\zeta(t)$ stands for the monodromy zeta-function $\zeta_{f_{s,\tau},\mathbf{0}}(t)$ of $f_{s,\tau}$ at~$\mathbf{0}$ for $\tau\not=0$, where $\{f_{s,\tau}\}$ is a deformation of $f_s$ (i.e., for $\tau=0$, we have $f_{s,0}=f_s$) obtained from a small perturbation of the coefficients of $f_s$ so that $f_{s,\tau}$ is Newton non-degenerate and has an isolated singularity at the origin for all small non-zero value of the parameter $\tau$.\footnote{Such a deformation exists as $\mbox{Sing}(V(f_s))\subseteq \mathbb{C}^{*3}\cup \{\mathbf{0}\}$. Of course, the zeta-function $\zeta_{f_{s,\tau},\mathbf{0}}(t)$ is independent of $\tau$ for all small $\tau\not=0$. Also, note that by Varchenko's theorem \cite[Theorem~(4.1)]{V}, $\zeta(t)$ coincides with monodromy zeta-function of $g_\tau:=f_{s,\tau}+h_s$ ($\tau\not=0$) at~$\mathbf{0}$, so that the results of \cite{O2,O3} apply.} The product at the rightmost end of \eqref{okaformula} is taken over all the singular points $\varrho_{s}$ of the hypersurface $E(P,f_s)$. 
Finally, $\zeta_{\pi^*\!g_s,\,\varrho_{s}}(t)$ denotes the monodromy zeta-function of $\pi^*\! g_s$ at $\varrho_{s}$. 

By Milnor-Orlik's theorem \cite[Theorem 4]{MO}, the zeta-function $\zeta(t)$ that appears in the right-hand side of \eqref{okaformula} is completely determined by $P$ and $d$, and hence it does not depend on $s$.
Also, we easily check that for each singular point $\varrho_{s}$, the exceptional divisors of a suitable resolution of $\pi^*\! g_s$ at $\varrho_{s}$ all have multiplicity greater than $d$, so that, by A'Campo formula \cite[Th\'eor\`eme~3]{A}, the zeta-function $\zeta_{\pi^*\! g_s,\,\varrho_{s}}(t)$ is written as a product of the form $\prod_j (1-t^{m_j})^{\nu_j}$ with $m_j>d$. Now, since $\{g_s\}$ is $\mu$-constant, and hence $\zeta_{g_s,\mathbf{0}}(t)$ is independent of $s$ (see \cite[Lemma 12]{O2}), all this implies that the middle term 
\begin{equation*}
(1-t^{d})^{\mu^{\text{\tiny tot}}(E(P,f_s))}
\end{equation*}
in the right hand-side of \eqref{okaformula}, and hence the total Milnor number $\mu^{\text{\tiny tot}}(E(P,f_s))$, are both independent of $s$. 
\end{proof}

To complete the proof of the theorem, we now show that the family $\{f_s\}$ has no bifurcation of singularities in $\mathbb{P}^{2*}(P)$. We argue by contradiction. Suppose there exists $s_0$ such that $\{f_s\}$ has a bifurcation of singularities at a singular point $\rho_{s_0}$ of $V_P(f_{s_0})\cap \mathbb{P}^{2*}(P)$. This means that there is a small ball $B_\varepsilon(\rho_{s_0})\subseteq\mathbb{P}^{2*}(P)$ centred at $\rho_{s_0}$  such that $\rho_{s_0}$ is the only singular point of $V_P(f_{s_0})$ in $B_\varepsilon(\rho_{s_0})$ and it is either a ``newly born'' singularity or it is obtained as a merging of several singularities of $V_P(f_{s})$ for $s$ near $s_0$, $s\not=s_0$. In other words, $s_0$ is a parameter for which the
natural projection
\begin{equation*}
\{(a,s)\in\mathbb{P}^{2*}(P)\times [0,1]\, ;\, a\in \mbox{Sing}(V_P(f_s))\}\to [0,1]
\end{equation*}
fails to be a covering map. Then, by a theorem of Lazzeri \cite{Lazzeri} and L\^e \cite[Th\'eor\`eme~B]{Le} (see also \cite{H}), for $s\not=s_0$ near $s_0$ we have
\begin{equation}\label{sumle}
\sum_{\rho_s\in B_\varepsilon(\rho_{s_0})\cap \text{\tiny Sing}(V_P(f_s))}\mu(V_P(f_{s}),\rho_s) < \mu(V_P(f_{s_0}),\rho_{s_0}),
\end{equation}
where $\mu(V_P(f_{s}),\rho_s)$ is the local Milnor number of $V_P(f_{s})$ at $\rho_s$.
As the Milnor number of $E(P,f_s)$ at a singular point $\varrho_s\in\mbox{Sing}(E(P,f_s))\subseteq\hat E(P)^*$ coincides with the Milnor number of $V_P(f_s)$ at $\psi(\varrho_s)\in\mathbb{P}^{2*}(P)$ (where $\psi\colon \hat E(P)^*\to\mathbb{P}^{2*}(P)$ is the analytic isomorphism \eqref{analyiso}), the relation \eqref{sumle} implies that 
\begin{equation*}
\mu^{\text{\tiny tot}}(E(P,f_s))<\mu^{\text{\tiny tot}}(E(P,f_{s_0})), 
\end{equation*}
which contradicts Lemma \ref{ctmn}. Thus the family $\{f_s\}$ indeed does not have any bifurcation of singularities. Therefore, $V_P(f_s)$ has exactly $k$ singular points $\rho_{s,1},\ldots,\rho_{s,k}$ in $\mathbb{P}^{2*}(P)$, and the corresponding $k$ Milnor numbers $\mu(V_P(f_{s}),\rho_{s,i})$, $1\leq i\leq k$, are independent of $s$.
Thus, by a theorem of L\^e \cite{Le2}, for any $s$ the germs 
\begin{equation*}
(V_P(f_s),\rho_{s,i})
\quad\mbox{and}\quad
(V_P(f_0),\rho_{0,i})
\end{equation*}
 are topologically equivalent.
As $f_0$ belongs to $\mathcal{W}_{P,d}'(\Delta,\Xi)$, so does $f_s$ for all $s$. In other words, $g_s\in\mathcal{W}_{P,d,m}''(\Delta,\Xi)$ for all $s$.

\section{Proof of Proposition \ref{prop-samecc}}\label{proof-prop-samecc}

 Let $\psi\colon \mathbb{P}^2\to\mathbb{P}^2(P)$ be the $p_1p_2p_3$-fold branched covering introduced in \eqref{mapphi}.
Consider the group $G:=\mu_{p_1}\times\mu_{p_2}\times\mu_{p_3}$, where $\mu_{p_i}$ is the group of $p_i$th roots of unity. Clearly, $G$ acts on $\mathbb{P}^2$ by 
\begin{equation*}
(\xi_1,\xi_2,\xi_3)\circ [x\!:\!y\!:\!z]:=[\xi_1x\!:\!\xi_2y\!:\!\xi_3z],
\end{equation*}
and $\psi$ is the quotient map by this group action, that is, the inverse image of a point in $\mathbb{P}^2(P)$ by $\psi$ is the orbit of a point in $\mathbb{P}^2$ under this action.
Note that this action preserves the toric stratifications of $\mathbb{P}^2$ and $V_{P_0}(F_j) \subseteq \mathbb{P}^2$---obtained by replacing $P$ by $P_0:={}^t(1,1,1)$ and $f_j$ by the pull-back $F_j:=\psi^*f_j\colon (x,y,z)\mapsto f_j(x^{p_1},y^{p_2},z^{p_3})$ in \eqref{toricstratifications}---in the sense that each stratum  is mapped to itself.

Put $\mathcal{I}:=[0,1]$, and pick a continuous path 
\begin{equation*}
\gamma\colon s\in\mathcal{I}\mapsto \gamma(s):=f_s(x,y,z)\in\mathcal{W}'_{P,d}(\Delta,\Xi)
\end{equation*}
which connects $f_0$ and $f_1$.
Since the $V_P(f_s)$'s have the same singularities $\xi_1,\ldots\xi_k\in\Xi$ as the parameter $s$ varies, it follows that for each $1\leq i\leq k$, the family  consisting  of all function-germs $(f_s,\rho_{s,i})$ for $s\in \mathcal{I}$ and $\rho_{s,i}\in\mbox{Sing}(V_{P}(f_s))\cap\mathbb{P}^{2*}(P)$ is a $\mu^*$-constant family.
Moreover, since $\mathcal{W}'_{P,d}(\Delta,\Xi)$ is constructible (see Proposition \ref{prop-constructible}), we may assume that the above path $\gamma$ is \emph{piecewise complex-analytic} (see \cite[Remark~5.3]{EO2}). Refer to footnote~\ref{f14} for the definition, noting that in the present context $\mathcal{W}''_{P,d,m}(\Delta)$ is to be replaced by $\mathcal{W}'_{P,d}(\Delta,\Xi)$. 
Hereafter, for simplicity, we shall assume that the integer $q_0$ appearing in this footnote is equal to 1, and that the path $\gamma$ is a \emph{complex-analytic} path defined on an open subset $U_0\subseteq\mathbb{C}$ containing $\mathcal{I}$. (Although slightly more technical, the general case follows the same pattern.) 
Now, by \cite[Theorem 5.4 and Proposition~4.2]{EO2},  there exists a $\mu^*$-constant complex-analytic family of function-germs parametrized by $s\in U_0$, which coincides with the original family $\{(f_s,\rho_{s,i})\}_{s\in\mathcal{I}}$ when the parameter $s$ varies within $\mathcal{I}$. 
Similarly, there is a complex-analytic path 
\begin{equation}\label{mscfF}
s\in U_0\mapsto F_s(x,y,z):=f_s(x^{p_1},y^{p_2},z^{p_3})
\end{equation}
which defines a $\mu^*$-constant complex-analytic family of function-germs parametrized by $s\in U_0$, and which coincides, for $s\in \mathcal{I}$, with the family of function-germs $(F_s,\check\rho_{s,i})$, where $\check\rho_{s,i}$ is one of the $p_1 p_2 p_3$ singular points of $V_{P_0}(F_s)\subseteq\mathbb{P}^2$ that is mapped to $\rho_{s,i}$ by $\psi$.

We aim to construct an \emph{admissible homeomorphism} between the pairs $(\mathbb{P}^{2}(P),V_P(f_0))$ and $(\mathbb{P}^{2}(P),V_P(f_1))$. By a partition of unity argument, it is enough to show that for each $0\leq a\leq 1$, there exist $\varepsilon>0$ and a continuous family of admissible homeomorphisms
\begin{align}\label{familyHomeo}
\phi_s\colon (\mathbb{P}^{2}(P),V_P(f_{a}))\to (\mathbb{P}^{2}(P),V_P(f_s)) 
\end{align}
for $s\in \mathcal{I}_{a,\varepsilon}^{\pm}$, where $\mathcal{I}_{a,\varepsilon}^{\pm}$ denotes either the interval $[a,a+\varepsilon]$ or the interval $[a-\varepsilon,a]$.
A classical approach to achieving this typically involves constructing a continuous vector field on $\mathcal{I}_{a,\varepsilon}^{\pm}\times \mathbb{P}^2(P)$ that is compatible with the subset
\begin{equation*}
 V_{a,\varepsilon}^{\pm}(f):=\{(s,[x\!:\!y\!:\!z]_P)\in\mathcal{I}_{a,\varepsilon}^{\pm}\times \mathbb{P}^2(P)\, ;\, f(s,x,y,z):=f_s(x,y,z)=0\}
\end{equation*} 
as well as with its restriction to the coordinate subspaces, and subsequently integrating the field.
However, due to the presence of singularities in $\mathbb{P}^2(P)$, such a direct construction proves to be challenging. To overcome this difficulty, we first construct a vector field on $\mathcal{I}_{a,\varepsilon}^{\pm} \times \mathbb{P}^2$ that possesses analogous properties and is, moreover,  $G$-invariant. The flow generated by this vector field then descends to a well-defined continuous flow on the quotient space $\mathcal{I}_{a,\varepsilon}^{\pm}\times \mathbb{P}^2(P)$, whose integration yields the desired continuous family of admissible homeomorphisms \eqref{familyHomeo}.
The possibility to carrying out this procedure is ensured by the following lemma in which we first construct such a vector field \emph{locally}.

\begin{lemma}\label{lemws}
Let $(a,\mathbf{p})\in\mathcal{I}\times\mathbb{P}^2$. There exist $\varepsilon>0$, a neighbourhood $N_{\mathbf{p}}$ of $\mathbf{p}$ in $\mathbb{P}^2\equiv \mathbb{P}^2(P_0)$, and a continuous stratified vector field $\mathcal{V}_{a,\mathbf{p}}$ on $\mathcal{I}_{a,\varepsilon}^{\pm}\times N_{\mathbf{p}}$ such that:
\begin{enumerate}
\item
$\mathcal{V}_{a,\mathbf{p}}$ respects the natural stratification of $\mathcal{I}_{a,\varepsilon}^{\pm}\times N_{\mathbf{p}}$ induced by 
\begin{equation*}
 V_{a,\varepsilon}^{\pm}(F):=\{(s,[x\!:\!y\!:\!z])\in\mathcal{I}_{a,\varepsilon}^{\pm}\times \mathbb{P}^2\, ;\, F(s,x,y,z):=F_s(x,y,z)=0\}
\end{equation*} 
and its singular set, as well as its restrictions to the coordinate subspaces 
\begin{equation*}
\mathcal{I}_{a,\varepsilon}^{\pm}\times (N_{\mathbf{p}}\cap \mathbb{P}^I(P_0)),
\end{equation*} 
where $I\subseteq\{1,2,3\}$, $I\not=\emptyset$;
\item
the pushforward of $\mathcal{V}_{a,\mathbf{p}}$ by the first projection $\omega\colon \mathcal{I}_{a,\varepsilon}^{\pm}\times N_{\mathbf{p}}\to \mathcal{I}_{a,\varepsilon}^{\pm}$ is given~by 
\begin{equation*}
\omega_*\mathcal{V}_{a,\mathbf{p}}=\frac{\partial}{\partial s}.
\end{equation*} 
\end{enumerate}
\end{lemma}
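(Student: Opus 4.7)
The plan is to construct $\mathcal{V}_{a,\mathbf{p}}$ locally, distinguishing three cases according to the position of $\mathbf{p}$ with respect to the curve $V_{P_0}(F_a) \subseteq \mathbb{P}^2$. In every case the vector field will have $s$-component identically~$1$, so that condition~(2) holds automatically, and will be simultaneously tangent to the hypersurface $V_{a,\varepsilon}^{\pm}(F)$, to its singular locus, and to each slab $\mathcal{I}_{a,\varepsilon}^{\pm} \times (N_{\mathbf{p}} \cap \mathbb{P}^I(P_0))$ for $\emptyset\neq I\subseteq\{1,2,3\}$.

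The two easy cases are the following. If $\mathbf{p} \notin V_{P_0}(F_a)$, by continuity of $F$ we can shrink $\varepsilon$ and $N_{\mathbf{p}}$ so that $F$ does not vanish on $\mathcal{I}_{a,\varepsilon}^{\pm} \times N_{\mathbf{p}}$, and the constant field $\partial/\partial s$ satisfies all requirements. If $\mathbf{p}$ is a smooth point of $V_{P_0}(F_a)$, the implicit function theorem applied to $F=0$ yields a local smooth section of $V_{a,\varepsilon}^{\pm}(F)$ over $\mathcal{I}_{a,\varepsilon}^{\pm}$; lifting $\partial/\partial s$ through this section gives a smooth vector field tangent to $V_{a,\varepsilon}^{\pm}(F)$, which is then extended off the hypersurface via a tubular neighbourhood. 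When $\mathbf{p}$ additionally lies on some coordinate subspace $\mathbb{P}^I(P_0)$, the constancy of $\Gamma(f_s)=\Delta$ together with the Newton non-degeneracy of $f_s$ on the proper faces of $\Delta$ ensures that $F_s|_{\mathbb{P}^I(P_0)}$ remains smooth at the image of $\mathbf{p}$; we first carry out the implicit function construction within each coordinate subspace, and then extend to $N_{\mathbf{p}}$, preserving tangency to every stratum.

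The principal obstacle is the case where $\mathbf{p}$ is a singular point of $V_{P_0}(F_a)$. By Remark~\ref{rem-singset}, the singular points of $V(f_a)$ lie in $\mathbb{C}^{*3}\cup\{\mathbf{0}\}$, so the singular points of $V_{P_0}(F_a)$ occurring in $\mathbb{P}^{2*}(P_0)$ are exactly the preimages under $\psi$ of the $\rho_{a,i}\in \mbox{Sing}(V_P(f_a))\cap \mathbb{P}^{2*}(P)$. The family \eqref{mscfF}, being $\mu^*$-constant at each preimage $\check\rho_{s,i}$, defines in particular a $\mu$-constant complex-analytic family of plane curve singularities; by the theorem of L\^e and Ramanujam, the pair $(\mathcal{I}_{a,\varepsilon}^{\pm} \times N_{\mathbf{p}}, V_{a,\varepsilon}^{\pm}(F))$ is topologically trivial near $(a,\mathbf{p})$ in a way compatible with the projection $\omega$, and a continuous stratified vector field with $s$-component identically~$1$ is extracted from this trivialization. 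Singular points of $V_{P_0}(F_a)$ lying on coordinate subspaces, if any, arise solely from the ramification of $\psi$ and are locally controlled by the restrictions of $f_s$ to the corresponding proper faces of $\Delta$; the constancy of $\Delta$ together with Newton non-degeneracy on those faces yields the analogous topological triviality while preserving the coordinate stratification. Patching the vector fields produced in the various cases via a partition of unity subordinated to the stratification yields the desired $\mathcal{V}_{a,\mathbf{p}}$.
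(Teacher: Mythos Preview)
Your overall strategy matches the paper's: reduce to the case where $\mathbf{p}$ is a singular point of $V_{P_0}(F_a)$, and there invoke an equisingularity theorem to obtain a stratified trivialization. However, you skip a technical step that the paper treats explicitly and that in fact motivates the very formulation of the lemma with one-sided intervals $\mathcal{I}_{a,\varepsilon}^{\pm}$.

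The issue is the dependence of the singular point $\check\rho_{s,i}$ on $s$. To apply L\^e--Ramanujam (or any $\mu$-constant trivialization result), you must first recentre the family so that the singularity sits at the origin for every $s$; this requires that $s\mapsto\check\rho_{s,i}$ be at least a well-defined analytic section. A priori the singular locus $\{(s,u,v):\partial_u F_s=\partial_v F_s=0\}$ is only a complex-analytic curve whose projection to the $s$-line may be ramified at $s=a$, so that $\check\rho_{s,i}$ is merely a Puiseux series $\sum_{n\geq 0}c_n(s-a)^{n/m}$. The paper handles this by passing to the parameter $t=(s-a)^{1/m}$ and to translated coordinates $(u',v')=(u-u(a+t^m),\,v-v(a+t^m))$, which straightens the trajectory to $\{u'=v'=0\}$. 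Only after this reparametrization does Teissier's theorem ($\mu^*$-constant $\Rightarrow$ Whitney $(b)$) combined with Thom--Mather produce the continuous stratified vector field. For even $m$ the substitution $t\mapsto t^m$ covers only $s\geq a$, which is precisely why the lemma is stated for $\mathcal{I}_{a,\varepsilon}^{+}$ and $\mathcal{I}_{a,\varepsilon}^{-}$ separately. Your proof, as written, has no mechanism that would force the one-sided formulation.

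A secondary point: L\^e--Ramanujam in its usual form asserts constancy of the topological type of the Milnor fibration, not the existence of an ambient stratified vector field lifting $\partial/\partial s$. The paper's route via Teissier and Thom--Mather yields the vector field directly. For plane curves the two are ultimately equivalent (since $\mu$-constant and $\mu^*$-constant coincide in dimension two), but you should cite a result that actually produces the trivialization, not merely the constancy of type.
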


\begin{proof}
It suffices to consider the case where $\mathbf{p}$ is a singular point $\check\rho_{a,i}$ ($1\leq i\leq k$). At any other point, the construction is clear.
Take local analytic coordinates $(u,v)$ near $\mathbf{p}\equiv\check\rho_{a,i}$. 
By Puiseux's theorem for space curves (see \cite{Maurer}), there exists an integer $m>0$ such that $\check\rho_{s,i}=(u(s), v(s))$ admits a local expansion of the form
\begin{equation}\label{trajectory}
\check\rho_{s,i}=\sum_{n \geq 0} c_n (s-a)^{n/m}, \ c_n\in\mathbb{C}^2,
\end{equation}
which converges in a sufficiently small neighbourhood $\mathcal{I}_{a,\varepsilon}:=[a-\varepsilon,a+\varepsilon]$ of $a$.
Let us denote by $C_{a,\varepsilon,i}:=\{(s,\check\rho_{s,i})\, ;\, s\in \mathcal{I}_{a,\varepsilon}\}$ the trajectory of the singular point $\check\rho_{s,i}$ defined by \eqref{trajectory}.
Then, consider the new parameter $t:=(s-a)^{1/m}$, so that this trajectory expands in a convergent power series in $t$. 
Note that for even values of $m$, this parametrization accounts only for the right-hand side of $C_{a,\varepsilon,i}$, namely for $C_{a,\varepsilon,i}^+:=\{(s,\check\rho_{s,i})\, ;\, s\in \mathcal{I}_{a,\varepsilon}^+:=[a,a+\varepsilon]\}$. Thus, the two sides---$C_{a,\varepsilon,i}^+$ on one hand, and the left-hand side $C_{a,\varepsilon,i}^-:=\{(s,\check\rho_{s,i})\, ;\, s\in \mathcal{I}_{a,\varepsilon}^-:=[a-\varepsilon,a]\}$ on the other hand---must be considered separately. Since the argument proceeds identically in both cases, we will restrict our attention to $C_{a,\varepsilon,i}^+$.

In the original coordinates $(s,u,v)$, the trajectory $C_{a,\varepsilon,i}^+$ is given by
\begin{equation*}
C_{a,\varepsilon,i}^+=\{(s,u(s), v(s))\, ;\, s\in \mathcal{I}_{a,\varepsilon}^+\}.
\end{equation*}
We now apply a change of coordinates that straightens $C_{a,\varepsilon,i}^+$. More precisely, consider the coordinates $(t,u',v')$ defined by $u':=u-u(a+t^m)$, $v':=v-v(a+t^m)$. Then, clearly, in these new coordinates, the trajectory is given by
\begin{equation*}
C_{a,\varepsilon,i}^+=\{(t,0,0)\, ;\, 0\leq t\leq\varepsilon^{1/m}\}.
\end{equation*}
Since the complex-analytic family \eqref{mscfF}---and consequently the corresponding family $F':=\{F_{a+t^m}\}$ in the new coordinates $(u',v')$---is $\mu^*$-constant, it follows from a theorem of Teissier \cite[Chap.~II, Th\'eor\`eme 3.9]{Teissier2} that there exist neighbourhoods $U$ and $N$ of $0\in \mathbb{C}$ and $\mathbf{0}\in \mathbb{C}^2$, respectively, such that $U\times N$ admits a Whitney $(b)$-regular stratification which is compatible with the family $F'$ and its singular locus. Then, by the Thom-Mather theorem, it follows that $F'$ is topologically trivial along its singular set. In particular, the family of function-germs $F_{a+t^m}$ for $t\in [0,\varepsilon^{1/m}]$ is topologically trivial along $[0,\varepsilon^{1/m}]\times \{\mathbf{0}\}$. Coming back to  the original parameter $s$ and coordinates $(u,v)$, we deduce that there exists a neighbourhood $N_{\mathbf{p}}$ of $\mathbf{p}\equiv\check\rho_{a,i}\in \mathbb{P}^2$ such that the family consisting of the $F_s$'s for $s\in \mathcal{I}^{+}_{a,\varepsilon}$ is topologically trivial along $C^{+}_{a,\varepsilon,i}\cap(\mathcal{I}^{+}_{a,\varepsilon}\times N_\mathbf{p})$.
This trivial structure enables the construction of a continuous stratified vector field on $\mathcal{I}^{+}_{a,\varepsilon}\times N_\mathbf{p}$ that satisfies conditions (1) and (2) of the lemma.
\end{proof}

Since no distinct singular points $\check\rho_{s,i}\not=\check\rho_{s',i'}$ merge into a single one (i.e., there is no bifurcation of singularities, as in the proof of Theorem \ref{mt2}) and since the above construction is trivial at any other point $\mathbf{p}\in\mathbb{P}^2$, it follows from Lemma \ref{lemws}, together with a standard partition of unity argument, that there exists a \emph{global} continuous stratified vector field $\mathcal{V}$ on $\mathcal{I}^{\pm}_{a,\varepsilon}\times\mathbb{P}^2$ such that:
\begin{enumerate}
\item
$\mathcal{V}$ respects the natural stratification of $\mathcal{I}^{\pm}_{a,\varepsilon}\times\mathbb{P}^2$ induced by $V_{a,\varepsilon}^{\pm}(F)$ and its singular set, as well as its restriction to the coordinate subspaces $\mathcal{I}^{\pm}_{a,\varepsilon}\times \mathbb{P}^I(P_0)$;
\item
$\omega_*\mathcal{V}=\frac{\partial}{\partial s}$.
\end{enumerate}

Now, consider the \emph{averaged} vector field
\begin{equation*}
\mathcal{V}^G:=\frac{1}{p_1p_2p_3}\sum_{g\in G} g_*\mathcal{V}
\end{equation*}
associated with $\mathcal{V}$.
Since  the $G$-action respects the stratifications of $\mathcal{I}^{\pm}_{a,\varepsilon}\times \mathbb{P}^2$ and $\mathcal{I}^{\pm}_{a,\varepsilon}\times \mathbb{P}^I(P_0)$, the pushforward $g_*\mathcal{V}$ satisfies properties (1) and (2) for all $g\in G$. It follows that $\mathcal{V}^G$ inherits these two properties as well while also being $G$-invariant.  
As its flow then commutes with the $G$-action on $\mathcal{I}_{a,\varepsilon}^{\pm}\times \mathbb{P}^2$, it descends to a continuous flow on the quotient $\mathcal{I}_{a,\varepsilon}^{\pm}\times \mathbb{P}^2(P)$, thereby yielding the desired continuous family of admissible homeomorphisms described in \eqref{familyHomeo}. More precisely, let~$\check\phi_s$ and $\phi_s$, $s\in \mathcal{I}_{a,\varepsilon}^{\pm}$, denote the one-parameter families of diffeomorphisms of $\mathbb{P}^2$ and $\mathbb{P}^2(P)$, generated by $\mathcal{V}^G$ and $\psi_*\mathcal{V}^G$, respectively. These families satisfy the compatibility condition 
\begin{equation*}
\psi \circ \check\phi_s = \phi_s \circ \psi,
\end{equation*}
which uniquely characterizes $\phi_s$.

\section{Proof of Theorem \ref{mt3}}\label{proofmt3}

By Remark \ref{rem-cpca} (b) and Theorem \ref{remthm1}, the polynomials $g_0$ and $g_1$ have the same Teissier $\mu^*$-invariant. Moreover, by a weighted homogeneous version of \cite[Theorem 25 and Remark 26]{O2}, they have diffeomorphic links (see Remark \ref{rem-cpca} (c)). Additionally, we easily deduce from \cite{O3,O2}, through formula \eqref{okaformula} above, that they also have identical monodromy zeta-functions, as detailed below.
Take a regular simplicial cone subdivision $\Sigma^*$ of the dual Newton diagram $\Gamma^*(g_0)=\Gamma^*(g_1)$ such that $P$ is a vertex of $\Sigma^*$, and consider the corresponding toric modification $\pi\colon X\to \mathbb{C}^3$. Let $\sigma:=\mbox{Cone}(P,Q,R)$ be a cone of $\Sigma^*$ having $P$ as a vertex, and let $(\mathbb{C}^3_\sigma,(u,v,w))$ be the corresponding toric chart, so that $\hat E(P)$ is defined by $u=0$. As in the proof of Lemma~\ref{ctmn}, the functions $g_0$ and $g_1$ are weakly almost Newton non-degenerate, and the formula \eqref{okaformula} applies to both of them. The zeta-function $\zeta(t)$ that appears in the right-hand side of this formula, and which is completely determined by $P$ and $d$ by Milnor-Orlik's theorem \cite[Theorem 4]{MO}, is the same for both $g_0$ and $g_1$.  
Since $f_0$ and $f_1$ form a Zariski pair of curves in $\mathbb{P}^2(P)$, and therefore share  the same combinatoric, we also have $\mu^{\text{\tiny tot}}(E(P,f_0))=\mu^{\text{\tiny tot}}(E(P,f_1))$. Now, modulo re-ordering, for any $\varrho_j\in\mbox{Sing}(E(P,f_j))$, $j=0,1$, there exists $\xi_i\in\Xi$ such that the isomorphism class of the germ $(E(P,f_j),\varrho_{j})$ is $\xi_i$ for $j=0$ and for $j=1$ (see section~\ref{sect-mngps}). Since the singularities $\xi_i$ in $\Xi$ are defined by Newton non-degenerate functions $\varphi_i$, there exist admissible local coordinates $(u,v',w')$ near $\varrho_{j}$ in which the pull-back $\pi^*\! g_j$ is written as
\begin{equation*}
\pi^*\! g_j(u,v',w')=u^d \Big(\phi_j(v',w')+\sum_{\alpha\geq m}c_{\alpha\beta\gamma}\, u^\alpha {v'}^\beta{w'}^\gamma\Big),
\end{equation*}
where $m\in\mathbb{N}^*$, $c_{\alpha\beta\gamma}\in\mathbb{C}$ with $c_{m00}\not=0$, $\phi_j(v',w')$---the defining function of $E(P,f_j)$---is Newton non-degenerate at the origin, and $\Gamma(\phi_0)=\Gamma(\phi_1)=\Gamma(\varphi_i)$ in their respective local coordinates (see Definition \ref{def-spaceND}).
 By an argument similar to that given in \cite[Claim 4.1]{EO3}, it follows that $\pi^*\! g_0$ and $\pi^*\! g_1$ are Newton non-degenerate and with the same Newton boundary, and hence, by Varchenko's theorem \cite[Theorem (4.1)]{V}, the zeta-functions 
\begin{equation*}
\zeta_{\pi^*\! g_0,\,\varrho_{0}}(t)
\quad\mbox{and}\quad
\zeta_{\pi^*\! g_1,\,\varrho_{1}}(t)
\end{equation*}
involved in formula \eqref{okaformula} coincide. By this formula, we therefore have $\zeta_{g_0,\mathbf{0}}(t)=\zeta_{g_1,\mathbf{0}}(t)$.

To complete the proof, it remains to show that $g_0$ and $g_1$ cannot be joined by a $\mu$-constant continuous path in $\mathcal{W}''_{P,d,m}(\Delta)$. By Remark \ref{rem-cpca} (a), this is equivalent to showing that $g_0$ and $g_1$ cannot be connected by a piecewise complex-analytic path in the $\mu$-constant stratum of $\mathcal{W}''_{P,d,m}(\Delta)$.
We argue by contradiction. Suppose there exists a $\mu$-constant piecewise complex-analytic family $\{g_s\}_{0\leq s\leq 1}$ connecting $g_0$ and $g_1$ in $\mathcal{W}''_{P,d,m}(\Delta)$. As $g_0,g_1\in\mathcal{W}''_{P,d,m}(\Delta,\Xi)$, it follows from Theorem \ref{mt2} that $g_s\in\mathcal{W}''_{P,d,m}(\Delta,\Xi)$ for any $s$. In particular, the weighted homogeneous initial polynomial $f_s$ of $g_s$ is in $\mathcal{W}'_{P,d}(\Delta,\Xi)$. 
Thus, by Proposition \ref{prop-samecc}, there is an admissible homeomorphism between the pairs $(\mathbb{P}^2(P),V_P(f_0))$ and $(\mathbb{P}^2(P),V_P(f_1))$, so that $f_0$ and $f_1$ do not form a Zariski pair of curves in $\mathbb{P}^2(P)$---a contradiction.

\section*{Acknowledgments}

This research was supported by the Narodowe Centrum Nauki under the grant number
2023/49/B/ST1/00848.

\bibliographystyle{amsplain}

\end{document}